\newtheorem{theorem}{Theorem}
\newtheorem{lemma}{Lemma}
\newtheorem{definition}{Definition}
\newtheorem{remark}{Remark}
\title{Avoiding Deadlocks via Weak Deadlock Sets}
\date{}
\author{
  Gianpaolo Oriolo %
  \thanks{DICII, Universit\`a degli Studi ``Tor Vergata'',
  Roma, Italy \texttt{oriolo@disp.uniroma2.it}}
  \and Anna Russo Russo%
   \thanks{DICII, Universit\`a degli Studi ``Tor Vergata'',
  Roma, Italy \texttt{anna.russorusso23@gmail.com}}
}
\begin{document}

\maketitle
\begin{abstract}
	A deadlock occurs in a network when two or more items prevent each other from moving and are stalled. In a general model, items are stored at vertices and each vertex $v$ has a buffer with $b(v)$ slots. Given a route for each item toward its destination, the Deadlock Safety Problem asks whether the current state is {\em safe}, i.e., it is possible to deliver each item at its destination, or is {\em bound to deadlock}, i.e., any sequence of moves will end up with a set of items stalled. While when $b \geq 2$ the problem is solvable in polynomial time building upon a nice characterization of \textsc{yes/no}-instances, it is \textsc{np}-hard on quite simple graphs as grids when $b=1$ and on trees when $b\leq 3$.

We improve on these results by means of two new tools, weak deadlock sets and wise states. We show that for general networks and $b$ a state that is wise and without weak deadlock sets -- this can be recognized in polynomial time -- is safe: this is indeed a strengthening of the result for $b\geq 2$. We sharpen this result for trees, where we show that a wise state is safe if and only if it has no weak deadlock set. That is interesting in particular in the context of  rail transportation where networks are often single-tracked and deadlock detection and avoidance focuses on local sub-networks, mostly with a tree-like structure.

We pose some research questions for future investigations.
\end{abstract}

{\bf Keywords:} deadlock; deadlock avoidance algorithm; bound to deadlock state; complexity analysis; characterizations; trees.

\section{Introduction}\label{intro}
A deadlock occurs in a network when two or more items, e.g. trains, prevent each other from moving and are stalled. A similar concept was developed in the context of rail transportation~\cite{dalsasso_2021, dalsasso_2022, li_2014, luteberget_2021, pachl_2011, petersen_1983}, store-and-forward networks~\cite{arbib_1988, blazewicz_1994}, computer systems~\cite{coffman_1971, holt_1972}, automated manufacturing systems~\cite{fanti_1997,viswanadham_1990}.

In one of the first paper dealing with deadlocks, in the area of computer systems, Coffman, Elphick, and  Shoshani~\cite{coffman_1971} distinguish three types of strategies for handling deadlocks: {\em deadlock detection and recovery}, {\em deadlock prevention} and {\em deadlock avoidance}. Algorithms of deadlock detection and recovery let deadlocks occur in a system but take action as soon as that happens. Algorithms for deadlock prevention impose restrictions on the system evolution in order to rule out deadlocks and are usually designed around strict conditions for avoiding deadlock formation. Algorithms for deadlock avoidance also exploit information on the current state in the network in order to prevent deadlock: in particular, while deadlock prevention algorithms may unnecessarily impede harmless state transitions, deadlock avoidance algorithms are maximally permissive and guarantee system progress whenever it is possible. 

In this paper, we focus on a problem, the {\em Deadlock Safety Problem} that was introduced in~\cite{arbib_1988}, and falls in the area of deadlock avoidance algorithms. In this model, items are placed at the vertices of a network $G(V,E)$, each vertex $v$ has a buffer that can accommodate up to $b(v)$ items, and each item has to be delivered at its destination following a route that is given. The Deadlock Safety Problem asks whether the current state is {\em safe}, i.e., it is possible to deliver each item at its destination, or vice versa is {\em bound to deadlock}, i.e., any possible sequence of movements will end up in a state where a set of items are stalled.

The complexity picture for the Deadlock Safety Problem is quite sensitive to the number of slots in each buffer: we call this number the {\em size} of the buffer. The problem is hard on quite simple graphs when this size is small, but easy on {\em any} graph as soon as the size is large enough. 
Namely, it is shown in~\cite{arbib_1988} that the problem is \textsc{np}-hard on trees when $b\leq 3$ and is hard on grids and other simple graphs already when $b=1$. However, things are quite different when buffers are ``large'': in their beautiful paper~\cite{blazewicz_1994}, Blazewicz et al. show that when $b\geq 2$ the problem can be solved in polynomial time. In this case, a state is safe if and only if there is no subset of items that is already stalled; moreover, that can be detected in polynomial time building upon the definition of {\em strong deadlock set} of vertices (formal definitions will be given in the following).

The results in~\cite{blazewicz_1994} characterize the Deadlock Safety Problem when $b\geq 2$. Unfortunately while large buffers are common in contexts such as store-and-forward networks and computer systems, that is often not the case for rail transportation networks. E.g., in 2015 only about 11 percent of the US railroad network was double-tracked~\cite{Hicks_2015}. Moreover, still in the context of railroad transportation, deadlocks are managed at a local scale where networks have a tree-like structure, see e.g.~\cite{dalsasso_2021, dalsasso_2022}.

\smallskip
{\bf Our contribution.} 
In order to go beyond the previous results we need to be able to handle vertices with $b(v) = 1$. Our main insight is that when there are buffers of different sizes, i.e., some with $b(v) \geq 2$ and some with $b = 1$, items should only {\em transit} on the latter and not {\em stay}. This led us to introduce two new tools, weak deadlock sets and wise states. Weak deadlock sets are easy to recognize and generalize strong deadlock sets to include some vertices $v$ with $b(v)=1$ in the set; it is indeed the case that when $b\geq 2$ any weak deadlock set is also strong. A state $\sigma$ is wise if $\sigma(v) = 0$ for each vertex $v$ with $b(v) =1$; hence when $b\geq 2$ any state is wise. We show that, for general networks and $b$, any state that is wise and without weak deadlock sets is safe. Note that, when $b\geq 2$, our results reduce to that in~\cite{blazewicz_1994} and it is indeed the case that our proof builds upon a careful analysis and extensions of the proofs in~\cite{blazewicz_1994}. We also prove that for trees with general $b$ a wise state is bound to deadlock if and only it has already a weak deadlock set. Therefore, on trees if for the current state there are no items on vertices $v$ with $b(v)=1$ and no weak deadlock sets, then it is possible to free the network.

 Finally, we pose a a few research questions, motivated by our results and proofs, that we believe should be addressed in order to further improve the complexity picture for the Deadlock Safety Problem. 

\smallskip
The paper is organized as follows. We close this section with a few definitions and examples. We devote Section~\ref{literature} to the literature and Section~\ref{bovetpaper} to provide more details about the results in~\cite{blazewicz_1994} for the case $b\geq 2$. We devote Section~\ref{wds} to our first new tool, the weak deadlock set, and in particular to show that on trees any state with a weak deadlock set is bound to deadlock. We devote Section~\ref{ws} to our other new tool, the wise state, and show that a wise state without weak deadlock sets is safe. Our last section is devoted to some concluding remarks and open questions.

\subsection{Definitions}\label{sec:def}
In this section we formally define the problem we are interested in: the {\em Deadlock Safety Problem}. While this problem was originally defined in~\cite{arbib_1988}, we prefer to state it from scratch, building upon our own definitions. In the following, for $k\in {\cal Z}_+$, we let $[k]: = \{1, \ldots, k\}$. In this model, items are placed at the vertices of an undirected network $G(V,E)$. Each vertex $v\in V$ has then a buffer with $b(v)$ slots that can accommodate up to $b(v)$ items: we assume that each item fits into a single slot and that vice versa each slot cannot host more than one item. We are then given: a pair $(G, b)$ where $G$ is an undirected network with vertices $V(G)$ and edges $E(G)$ and $b: V(G)\mapsto {\cal Z}_+$ is the size of each buffer at vertex $v$;

In the following, when there is no risk of confusion, we let $V:=V(G)$ and $E:=E(G)$. Note also that we usually refer to {\em vertices} and {\em edges} for undirected networks and to {\em nodes} and {\em arcs} for directed networks.

\smallskip
We are also given initial placements for items at vertices of $V$ through a map $\sigma: V\mapsto {\cal Z}_+$; therefore, for each $v\in V$, $\sigma(v)$ items are placed at the buffer of $v$ (we of course assume that $\sigma(v)\leq b(v)$). We are also given, for each $v\in V$ and each item ``sitting'' at $v$, a {\em destination}, which is a vertex in $V$, and a {\em route}, that is a path of $G$ from $v$ to the destination. The combination of placements and routes is called a {\em state}. More formally, a state is defined as follows: 

\begin{definition}[State]\label{follower}
A state is a pair $(\sigma, {\cal R})$ such that the followings hold:
\begin{itemize}
\item $\sigma: V\mapsto {\cal Z}_+$ is such that $\sigma(u) \leq b(u)$, for each $u\in V$;
\item ${\cal R}:= \bigcup_{u\in V}{\cal R}(u)$, where each ${\cal R}(u)$ is a collection of $\sigma(u)$ simple paths in $G$, each starting at $u$. Note that we allow for multiple copies of a same path in each ${\cal R}(u)$, i.e., each ${\cal R}(u)$ and ${\cal R}$ are in general multi-sets.
\end{itemize}
\end{definition}

We point out that the pair $(\sigma, {\cal R})$ provide all the information we need about the items, therefore an instance of the Deadlock Safety Problem is encoded by a pair $\{(G,b), (\sigma, {\cal R})\}$; note in particular that two items that are sitting at a same vertex $v$ and have the same destinations and routes cannot be distinguished. Before moving on, we detail our definition of {\em path}: a path $P$ from a vertex $u$ to a vertex $v$ is a sequence of vertices $v_0, v_1, \ldots, v_{k-1}, v_k$ such that $v_0=u$, $v_k=v$ and $\{{v_{i-1},v_i}\}\in E$, for $i=1..k$. A path is {\em simple} if the vertices in the sequence are all distinct. 

\begin{remark}
While for the sake of simplicity we assume that each path defining a route is {\em simple}, our results and proofs do indeed extend to the case where routes are non-simple.
\end{remark}

Given an instance $\{(G,b), (\sigma, {\cal R})\}$ the Deadlock Safety Problem asks whether it is possible to deliver each item at its destination through a sequence of {\em feasible moves}:

\begin{definition}[Feasible move $\vec{P}$]\label{move}
Let $(\sigma, {\cal R})$ be a state, $u\in V$ a vertex with $\sigma(u) >0$ and $P$ a path in ${\cal R}(u)$. Let $v$ be the follower of $u$ with respect to $P$ and assume that $\sigma(v) < b(v)$. We may then associate with $P$ the feasible move $\vec{P}$ that shifts one item from $u$ to $v$. Through $\vec{P}$ we evolve from state $(\sigma, {\cal R})$ into a new state $\vec{P}(\sigma, {\cal R}) := (\sigma', {\cal R}')$. Namely:
\begin{itemize}
\item $\sigma'(u) = \sigma(u)-1$; $\sigma'(z) = \sigma(z)$, for each $z\notin\{u, v\}$;
\item $\sigma'(v) = \sigma(v) + 1$ if $v$ is not the destination of $P$, else $\sigma'(v) = \sigma(v)$;
\item ${\cal R}'(u) = {\cal R}(u)\setminus P$; ${\cal R}'(z) = {\cal R}(z)$ for each $z\notin\{u, v\}$;
\item ${\cal R}'(v) = {\cal R}(v)\cup \{P|v\}$ if $v$ is not the destination of $P$, else ${\cal R}'(v) = {\cal R}(v)$.
\end{itemize}
where $P|v$ is the sub-path of $P$ starting at $v$, i.e., if $P=v_0, v_1, \ldots, v_{k-1}, v_k$, with $u=v_0$ and $v=v_1$, $P|v=v_1, v_2,\ldots, v_{k-1}, v_k$.
\end{definition}

According to the previous definition, our model assumes that each item ``disappears'' as soon as it reaches its destination $w$, i.e., it does not occupy any slot of the buffer at $w$. Also with any path $P\in {\cal R}$ we may associate a feasible move that is denoted as ${\vec P}$. In the following, we also ``reverse'' this notation: if ${\vec P}$ is a feasible move, we let $P$ be the path defining that move.

\begin{definition}[Feasible sequence of moves ${\vec{\cal P}}$]\label{sequencemove}
Let $(\sigma^0, {\cal R}^0)$ be a state. A sequence ${\vec{\cal P}}:= ({\vec P}^1, \ldots, {\vec P}^k)$ is a feasible sequence of moves for $(\sigma^0, {\cal R}^0)$ if the followings hold, for each $i\in [k]$:
\begin{itemize}
\item $P^{i}$ is a path in ${\cal R}^{i-1}$;
\item ${\vec P}^{i}$ is a feasible move for the state $(\sigma^{i-1}, {\cal R}^{i-1})$;
\item $(\sigma^i, {\cal R}^i): = {\vec P}^i(\sigma^{i-1}, {\cal R}^{i-1})$.
\end{itemize}
In this case, we also denote the final state defined by ${\vec{\cal P}}$ as ${\vec{\cal P}}(\sigma^0, {\cal R}^0)$, i.e., ${\vec{\cal P}}(\sigma^0, {\cal R}^0):= {\vec P}^k(\ldots({\vec P}^1(\sigma^0, {\cal R}^0))\ldots)$.
\end{definition}

We are now ready to provide the definition of {\em strong deadlock sets}, a tool that is introduced in~\cite{blazewicz_1994} (even though they do not use this name). This requires another preliminary definition:

\begin{definition}[Follower]
Given a state $(\sigma, {\cal R})$, if for $u\in V$ there exists a path $P\in {\cal R}(u)$ such that $v$ is the vertex following $u$ on $P$, then $v$ is the follower of $u$ with respect to $P$, or simply a follower of $u$.
\end{definition}

\begin{definition}[Strong Deadlock Set]
Given a state $(\sigma, {\cal R})$, a non-empty subset $Q\subseteq V$ defines a strong deadlock set for $(\sigma, {\cal R})$ if:
\begin{itemize}
\item $\sigma(u) = b(u)$, for each $u\in Q$;
\item for each $u\in Q$, each follower of $u$ belongs to $Q$.
\end{itemize}
\end{definition}

If for a state $(\sigma, {\cal R})$ there is a strong deadlock set $Q$, then the items sitting at the vertices in $Q$ prevent each other from moving and are therefore stalled: it is therefore not possible to deliver each item at its destination. However, it is also possible that no set $Q$ provides a strong deadlock set for the $(\sigma, {\cal R})$ but nevertheless each possible feasible sequence of {\em moves} from $(\sigma, {\cal R})$ ends up in a state with a set of vertices that is a strong deadlock set. This leads to the following definition of {\em bound to deadlock} state and {\em safe} state.

\begin{definition}[Bound to Deadlock and Safe State~\cite{arbib_1988}]
A state ${(\sigma, {\cal R}})$ is bound to deadlock if every feasible sequence of moves starting at $(\sigma, {\cal R})$ ends up in a state with a strong deadlock set. A state that is not bound to deadlock is safe.
\end{definition}

If a state is safe then there exists a feasible sequence of moves to free the network. This goes as follows. First, we we associate with each state ${(\sigma, {\cal R}})$ a potential function:
\begin{equation}
\label{eq:potential_function}
u(\sigma, {\cal R}) = \sum_{u\in V}\sum_{P\in {\cal R}(u)} |E(P)|.
\end{equation}

Note that each feasible move will decrease the potential by one. Now suppose that $(\sigma, {\cal R})$ is a safe state and assume that $u(\sigma) >0$ (else $\sigma(v) = 0$ for each $v\in V$). Each state that has no strong deadlock sets admits at least one feasible move. By definition, for a safe state there exists a feasible sequence of moves that induces a sequence of states without strong deadlock sets. Hence this sequence of states has to end up in the state with potential 0, i.e., the network is free.
We may therefore state: 

\medskip\noindent
\textbf{The Deadlock Safety Problem} (\textsc{dsp}) . {\bf Given} a graph $G(V, E)$, sizes $b:V\mapsto {\cal Z}_+$, and a state $(\sigma, {\cal R}): \sigma(v)\leq b(v)$ for each $v\in V$; {\bf find} whether $(\sigma, {\cal R})$ is bound to deadlock or safe.

\begin{remark}\label{basicbis}
It is possible that $(\sigma, {\cal R})$ is safe but a sequence of moves from $(\sigma, {\cal R})$ takes into a state with a strong deadlock set.
\end{remark}

\subsection{Examples}

\paragraph*{Example}
\label{firstex}
An instance of \textsc{dsp} is given in Figure~\ref{fig:weak_not_sufficient_double}. For now ignore the blue arcs. The set $V: = \{A, B, C, D, E, F\}$ with $b(v)=2$ for $v\in \{A, B, C\}$ and $b(v)=1$ otherwise. Two items are stored at $A, B, C$, i.e., $\sigma(v)=2$ for $v\in \{A, B, C\}$, while $\sigma(D) = \sigma(E) = \sigma(F) = 0$. The edges of $E$ correspond to the black segments. As for ${\cal R}$, items at a same vertex have the same route, and they are respectively: $(A, E, B), (B, F, C), (C, D, A)$. The state ${(\sigma, {\cal R}})$ is safe: this goes as follows. E.g., move first one item from $A$ to $E$. Now one slot at the buffer at $A$ is empty and so we may take both items sitting at $C$ at their destination (one at a time). Now both slots at $C$ are empty and we may take the items sitting at $B$ at their destination (again, one at a time). We finally take to their destinations first the item sitting at $E$ and then that at $A$.

\begin{figure}
    \centering\includegraphics[scale=0.4]{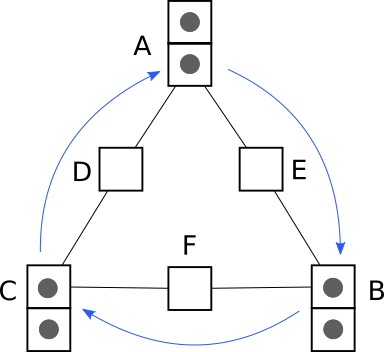}
    \caption{A \textsc{yes}-instance of \textsc{dsp}: see Example~\ref{firstex}. }
    \label{fig:weak_not_sufficient_double}
\end{figure}

\paragraph*{Example}
\label{secondex}
An instance of \textsc{dsp} is given in Figure~\ref{fig:weak_deadlock}. Ignore again the blue arcs. The set $V: = \{A, B, C, D, E\}$ with $b(v)=1$ for $v\in \{A, B, D, E\}$ and $b(C)=2$. The placement of items is such that $\sigma(A)=\sigma(E)=1$, $\sigma(C)=2$ and $\sigma(B)=\sigma(D)=0$. The edges of $E$ correspond to the black segments. As for ${\cal R}$, items must be routed according to the following paths: $(A, B, C, D, E), (C, B, A), (C, D, E), (E, D, C, B, A)$: note that the items at $C$ have different destinations and routes. It is easy to see that the state ${(\sigma, {\cal R}})$ is bound to deadlock.

\begin{figure}
    \centering\includegraphics[scale=0.4]{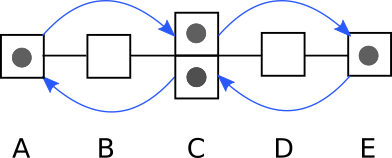}
    \caption{A \textsc{no}-instance of \textsc{dsp}: see Example~\ref{secondex}.}
    \label{fig:weak_deadlock}
\end{figure}

\section{Related literature}\label{literature}
The Deadlock Safety Problem falls in the domain of deadlock avoidance models (see the discussion at the beginning of Section\ref{intro}). In the following, we therefore focus on the previous literature for \textsc{dsp} as well as discuss some contributions that either deal with small variants of \textsc{dsp} or with other deadlock avoidance models mainly in the context of rail transportation.

\smallskip
{\bf Complexity results.} The first complexity result for \textsc{dsp} is given in~\cite{araki_1977}, in a setting where a system of multiple processes compete for the same computing resources. Besides other results, it is there shown that \textsc{dsp} is \textsc{np}-complete even when $b(v)=1$ for each $v\in V$.
That result is strengthened in~\cite{arbib_1988}, where it is shown that, when $b(v)=1$ for each $v\in V$, \textsc{dsp} is hard already on quite simple graphs, namely: grids, bipartite graphs and two-terminals series parallel graphs. Still in~\cite{arbib_1988} it is shown that things are a bit more involved when $G$ is a tree. Namely, if $G$ is a tree and $b(v)=1$ for each $v\in V$, then \textsc{dsp} is easy and a state is safe if and only if there are no {\em cycle of packets}~\cite{arbib_1988}, that are easy to recognize. However, they show that the problem is \textsc{np}-hard when $G$ is still a tree and $b\leq 3$.

\smallskip
{\bf Sufficient conditions.} Several papers deal with conditions that are sufficient for a state without strong deadlock sets to be safe. For our purposes, the most interesting contribution are given in~\cite{blazewicz_1994} where the authors show that, when $b(v)\geq 2$ for any $v\in V$, the \textsc{dsp} is easy and strong deadlock sets are the only obstructions (we will devote Section~\ref{bovetpaper} to this result). The same result is again shown in~\cite{reveliotis_1997}. 
Other papers, especially in the context of automated manufacturing systems, provide sufficient conditions that exploit suitable hypotheses on the routes of the items.
First~\cite{fanti_1997}, for the case  where $b(v) = 1$ for each $v\in V$, and~\cite{wu_2013} provide sufficient conditions that are based on properties of cycles in the Follower Network (see Definition~\ref{StrongNetwork}).  
Then in~\cite{kumar_1998} it is shown that a state without strong deadlock sets is safe as soon as each vertex $v$ with $b(v)=1$ is either preceded in all routes by a same vertex $v'$, or followed in all routes by a same vertex $v''$.
Finally Lawley and Reveliotis~\cite{lawley_2001} provide sufficient conditions when there is an ordering of the vertices in $V$ and the possible moves are consistent with this ordering according to a suitable definition.

\smallskip
{\bf Variants.}
Several papers in the literature deal with small variants of \textsc{dsp}, we name a few. First Toueg and Steiglitz~\cite{toueg_1981} investigate the complexity of the {\em Deadlock Exposure Problem}, i.e., the problem of deciding whether from a given starting state it is possible to reach a deadlock state or not. Then~\cite{bovet_1997} and~\cite{arbib_1988} deal with complexity of \textsc{dsp} when each item has a destination but not a fixed route.
Finally~\cite{diianni_1997} and~\cite{dalsasso_2022} deal with the complexity of \textsc{dsp} when some items (e.g. trains) are ``long'' and may therefore occupy more than one buffer slot at the same time, possibly on different buffers along the item route: this problem is shown to be \textsc{np}-complete in~\cite{diianni_1997} (even when items have destinations but not fixed routes); on the other hand, in~\cite{dalsasso_2022} it is proved to be easy when there are only two items such that each can occupy at most one slot in each buffer.

\smallskip
{\bf Deadlock avoidance in rail transportation.}
Deadlock avoidance problems often arise in the context of rail transportation. Most contributions in the literature approach these problems through heuristic algorithms and only a small number of exact algorithms have been proposed in literature. Heuristic solutions suffer either from the possibility of generating false positives~\cite{pachl_2007, li_2014}, i.e. deeming safe states as bound to deadlock and being therefore too restrictive, or false negatives~\cite{pachl_2011}, i.e. allowing wrong moves that may lead from a safe state to a bound to deadlock one.
Dal Sasso et al.~\cite{dalsasso_2021} provide \textsc{milp} formulations for \textsc{dsp} as well as for some recovery problem following a deadlock: these formulations have been suitably encoded and integrated inside an efficient Autonomous Traffic Management System that is able to deal with generic railway networks and long items, i.e., trains that are longer than individual railway sections (see above).
Another exact method for \textsc{dsp} with long items exploits a SAT-based reformulation of the problem and it is developed in Luteberget~\cite{luteberget_2021}.

\section{The case with buffers of size at least 2}\label{bovetpaper}

A characterization of strong deadlock sets as directed dominating sets in an auxiliary directed graph is given in~\cite{blazewicz_1994}. A {\em directed dominating set} in a directed graph $D$ is a set of nodes $S\subseteq V(D)$ such that for each vertex $u\in V(D)\setminus S$ there exists $v\in S$ such that $(u,v)\in E(D)$. In order to recall this characterization we need a couple of definitions.

\begin{definition}[Follower Network]\label{StrongNetwork}
Given a pair $(G, b)$ and a state $(\sigma, {\cal R})$, the (directed) Follower Network $D(\sigma, {\cal R})$ is defined as follows:
\begin{itemize}
\item $V(D(\sigma, {\cal R})) = V$;
\item $E(D(\sigma, {\cal R})) = \{(u,v): u,v\in V$ and $v$ is a follower of $u\}$.
\end{itemize}
\end{definition}

\begin{definition}[Free($\sigma$)]
Given a state $(\sigma, {\cal R})$, we let $Free(\sigma)$ be the set of vertices not saturated by $\sigma$, i.e., $Free(\sigma): = \{u\in V: \sigma(u) < b(u)\}$.
\end{definition}

\begin{lemma}\label{recognize}\cite{blazewicz_1994}
There is a strong deadlock set for a state $(\sigma, {\cal R})$ if and only if $Free(\sigma)$ is not a dominating set for the transitive closure of $D(\sigma, {\cal R})$. In particular, if $Free(\sigma)$ is not a dominating set, then the set of nodes that are not dominated by $Free(\sigma)$ defines a strong deadlock set for $(\sigma, {\cal R})$.
\end{lemma}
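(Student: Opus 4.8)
The plan is to unfold, side by side, the definition of a strong deadlock set and the definition of domination in the transitive closure of the Follower Network, and to observe that they describe the very same object. Write $D := D(\sigma, {\cal R})$ and let $D^*$ denote its transitive closure, so that $(u,v) \in E(D^*)$ exactly when there is a directed path from $u$ to $v$ in $D$, i.e.\ when $v$ is reachable from $u$ by a chain of followers. The crucial reformulation I would isolate first is this: a vertex $u$ is \emph{not} dominated by $Free(\sigma)$ in $D^*$ precisely when (i) $u \notin Free(\sigma)$, i.e.\ $\sigma(u) = b(u)$, and (ii) there is no directed path in $D$ from $u$ to any vertex of $Free(\sigma)$ — equivalently, every vertex reachable from $u$ along followers is saturated. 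This reachability reading is the bridge between the two sides of the equivalence.

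For the ``in particular'' statement (which supplies the implication ``$Free(\sigma)$ not dominating $\Rightarrow$ a strong deadlock set exists''), I would let $Q$ be the set of vertices not dominated by $Free(\sigma)$ in $D^*$, assume $Q \neq \emptyset$, and check the two defining properties of a strong deadlock set. The first, $\sigma(u) = b(u)$ for each $u \in Q$, is immediate from condition (i) above. For the second, take any follower $v$ of some $u \in Q$, so $(u,v) \in E(D) \subseteq E(D^*)$, and argue by contradiction: if $v \in Free(\sigma)$ then $u$ is dominated directly; and if $v$ can reach some free $w$, then transitivity of $D^*$ gives $(u,w) \in E(D^*)$ with $w \in Free(\sigma)$, so again $u$ is dominated. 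Either case contradicts $u \in Q$, so $v$ is saturated and cannot reach a free vertex, i.e.\ $v \in Q$. Thus $Q$ is closed under taking followers and is a strong deadlock set.

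For the converse implication (``a strong deadlock set exists $\Rightarrow$ $Free(\sigma)$ is not dominating''), I would start from an arbitrary strong deadlock set $Q'$ and show each $u \in Q'$ is undominated by $Free(\sigma)$ in $D^*$. Saturation again gives condition (i). For condition (ii), I would follow an arbitrary directed path $u = u_0, u_1, \dots, u_k$ in $D$: since each $u_i$ is a follower of $u_{i-1}$ and $Q'$ is closed under followers, an easy induction places the entire path inside $Q'$, so in particular $u_k$ is saturated. Hence no path from $u$ reaches $Free(\sigma)$, and $u$ is undominated; as $Q'$ is nonempty, $Free(\sigma)$ cannot be a dominating set.

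I do not anticipate a genuine obstacle: the argument is a careful translation of definitions, and the only point demanding attention is the disciplined use of the transitive closure — namely that a single follower step is already an arc of $D^*$, and that prepending such a step to a path reaching a free vertex stays inside $D^*$ by transitivity. Keeping the orientation of the arcs straight throughout (always from a vertex toward its follower) is the one spot where a careless slip would derail the proof.
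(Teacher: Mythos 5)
Your argument is correct and is essentially the same definitional unwinding the paper uses: the paper does not prove Lemma~\ref{recognize} directly but proves the generalization Lemma~\ref{bisrecognize} by exactly this route, showing that the set of vertices not dominated by $Free(\sigma)$ in the transitive closure is precisely a deadlock set, and conversely that a deadlock set is closed under followers and hence undominated. Your version merely spells out the reachability and induction details that the paper leaves implicit.
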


Note that we will later provide a proof for a lemma, Lemma~\ref{bisrecognize}, that generalizes Lemma~\ref{recognize}\footnote{Lemma~\ref{recognize} corresponds to Theorem 2 in~\cite{blazewicz_1994}. While most results in~\cite{blazewicz_1994} require that $b(u)\geq 2$ for each $u$, a careful check of the proof of Theorem 2 shows that it indeed holds for any $b$.}. Note also that following Lemma~\ref{recognize} strong deadlock sets can be found in polynomial time. We now deal with the case $b\geq 2$.

\begin{lemma}\label{carbovet}\cite{blazewicz_1994}
Let $\{(G, b), (\sigma, {\cal R})\}$ be an instance of \textsc{dsp} with $b\geq 2$. If $(\sigma, {\cal R})$ has no strong deadlock sets, then it is possible to find in polynomial time a feasible move $\vec{P}$, for some $u\in V$ and $P\in {\cal R}(u)$, such that ${\vec P}(\sigma, {\cal R})$ is again a state without strong deadlock sets.
\end{lemma}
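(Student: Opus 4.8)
The plan is to work entirely through the characterization of Lemma~\ref{recognize}: a state has no strong deadlock set exactly when $Free(\sigma)$ is a dominating set of the transitive closure of the follower network $D(\sigma,{\cal R})$, i.e.\ when every saturated vertex can reach a vertex of $Free(\sigma)$ by a directed path in $D(\sigma,{\cal R})$. So the whole statement reduces to exhibiting one feasible move after which $Free$ is still dominating, and to arguing this move can be located in polynomial time. The latter is the easy part, since there are only polynomially many feasible moves and each candidate can be tested with Lemma~\ref{recognize}; the real content is the existence proof.

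The first thing I would record is how little the follower network changes under a single move. If $\vec P$ shifts an item from $u$ to its follower $v$ along $P$, then the only directed edge that can disappear from $D(\sigma,{\cal R})$ is $(u,v)$ (all other paths out of $u$ survive, and the in-edges of every vertex are untouched), while the only new edges appear out of $v$, toward the successor of $v$ on $P$. Moreover $u$ always gains a free slot, so $u\in Free(\sigma')$, whereas $v$ leaves $Free$ only if $\sigma(v)=b(v)-1$ and $v$ is not the destination of $P$. This gives a clean two-case split.

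In Case A the move keeps $v$ free (because $\sigma(v)\le b(v)-2$, or because $v$ is the destination). Then $Free(\sigma')\supseteq Free(\sigma)$, the unique possibly-deleted edge $(u,v)$ leaves the now-free vertex $u$, and I would argue domination is preserved by a truncation argument: any directed path from a saturated $w$ to $Free(\sigma)$ that used the edge $(u,v)$ must first reach $u$, and $u$ is itself free in $\sigma'$, so the path can be cut there; all other paths survive verbatim and still end in $Free(\sigma')$. Picking any saturated $u$ at distance $1$ from $Free(\sigma)$ (which exists whenever saturated vertices exist, by taking the last saturated vertex on a shortest directed path to $Free$) furnishes such a move whenever one of its free followers stays free.

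Case B is where I expect the real difficulty, and where the hypothesis $b\ge 2$ is used. Here every candidate move saturates its target $v$, so $\sigma(v)=b(v)-1\ge 1$; crucially, because $b\ge 2$ this forces $v$ to already hold an item and hence to have an outgoing edge in $D$. The danger is that some saturated vertex reached $Free$ only through $v$, and $v$ has now left $Free$. I would neutralize this by choosing the move so that the newly saturated $v$ still reaches a free vertex \emph{other than itself} in the new network --- concretely, by analysing the condensation of $D$ and selecting $v$ inside a sink strongly connected component, where the domination hypothesis guarantees a free vertex and strong connectivity (available because $v$ has an out-edge) lets $v$ reach it. With such a choice the truncation argument of Case A extends: a saturated $w$ whose only witness was $v$ now routes $w\to\cdots\to v\to\cdots\to(\text{free})$, and everyone whose path used $(u,v)$ stops at the free vertex $u$. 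The main obstacle is thus pinning down the selection rule for $v$ that simultaneously admits a saturated in-neighbour and preserves reachability to $Free$, and verifying that it is always realizable under $b\ge 2$; once existence is secured, polynomiality follows from the brute-force test noted above.
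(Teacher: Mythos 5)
Your overall architecture is sound: reducing everything to preservation of domination via Lemma~\ref{recognize}, splitting on whether the move's target stays free, and truncating witness paths at the vacated vertex $u$ all match the standard argument, and your Case~A is fine. But Case~B --- which you yourself flag as ``the main obstacle'' --- contains a genuine gap, and it sits exactly where the content of the lemma lives. Your rule selects the target $v$ inside a sink strongly connected component $S$ and argues that ``the domination hypothesis guarantees a free vertex'' in $S$ which $v$ can still reach after being saturated. The problem is that this guaranteed free vertex may be $v$ itself --- in Case~B that is the typical situation --- and then after the move $S$ contains no free vertex at all, while every out-path of the now-saturated $v$ stays inside $S$ precisely because $S$ is a sink. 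Concretely: let $G$ be the path $a-b-c$ with $b\equiv 2$, $\sigma=(2,1,2)$, two items at $a$ routed $(a,b,c)$, one item at $b$ routed $(b,c)$, two items at $c$ routed $(c,b,a)$. The follower network has arcs $a\to b$, $b\to c$, $c\to b$; $Free(\sigma)=\{b\}$ dominates, $\{b,c\}$ is a sink SCC, and both feasible moves saturate $b$, so we are in your Case~B with $v=b$. Nothing in your rule forbids choosing the source $u=a$ (indeed you ask for a saturated in-neighbour, and $a$ is one); after that move the set $\{b,c\}$ is a strong deadlock set. The move that works is the one from $u=c$: the source must be chosen \emph{inside} $S$.

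That is the missing idea. Pick $v$ free in a non-singleton sink SCC $S$ and pick $u$ as an in-neighbour of $v$ \emph{within} $S$ (it exists by strong connectivity and has an item since it has an out-arc); then after the move $u$ itself is the free vertex of $S$ that $v$ reaches, and your truncation argument closes the proof. You also need the existence claim you defer: in Case~B every sink SCC reachable from a vertex holding an item is non-singleton (a singleton sink target is never saturated by a move, since $b\geq 2$ and it holds no item), and a non-singleton sink SCC all of whose vertices are saturated would itself be a strong deadlock set --- this is where the hypothesis of the lemma, and not mere domination, enters. For comparison, the paper does not reprove this lemma directly but proves the generalization, Lemma~\ref{carbovet_riv}; its case $(ii)$ meets the same difficulty and resolves it with the same underlying mechanism in a different guise: if the move into $v$ breaks domination, then $v$ lies on a cycle of the (wise) follower network, and one instead moves into $v$ from its predecessor $v'$ on that cycle, so that the vacated $v'$ is reachable from $v$. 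Your sink-component route can be completed along these lines, but as written the selection rule --- which you explicitly leave open --- is both unspecified and, if resolved naively, demonstrably wrong.
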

Building upon Lemma~\ref{recognize} and Lemma~\ref{carbovet} one may therefore state the following: 

\begin{theorem}\label{fondcarbovet}\cite{blazewicz_1994}
Let $\{(G, b), (\sigma, {\cal R})\}$ be an instance of \textsc{dsp} with $b\geq 2$. The state $(\sigma, {\cal R})$ is bound to deadlock if and only if there is a strong deadlock set and this can be recognized in polynomial time.  
\end{theorem}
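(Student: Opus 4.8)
The plan is to prove the two implications of the biconditional separately and then settle the complexity claim through Lemma~\ref{recognize}. I would phrase things so that both directions reduce to clean assertions: (i) if a strong deadlock set exists the network can never be freed, and (ii) if no strong deadlock set exists the network can be freed, i.e. the state is safe.

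First I would dispatch the easy direction. Suppose $Q$ is a strong deadlock set for $(\sigma, {\cal R})$. The key observation is that $Q$ is ``frozen'': every $u\in Q$ has $\sigma(u)=b(u)$ and all of its followers lie in $Q$, so no item can leave $Q$ (each follower is itself saturated) and no item can enter a vertex of $Q$ (again saturation). Hence after any feasible move $Q$ is still a strong deadlock set, and inductively $Q$ remains a strong deadlock set after any feasible sequence of moves. In particular the items sitting in $Q$ can never be delivered, so the potential $u(\sigma,{\cal R})$ of \eqref{eq:potential_function} can never reach $0$; the state is not safe, i.e. it is bound to deadlock (every feasible sequence terminates in a state still containing the strong deadlock set $Q$).

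Next I would handle the substantive direction using Lemma~\ref{carbovet}. Assume $b\geq 2$ and that $(\sigma,{\cal R})$ has no strong deadlock set. I would build a freeing sequence greedily: set $(\sigma^0,{\cal R}^0):=(\sigma,{\cal R})$ and, as long as $u(\sigma^i,{\cal R}^i)>0$, invoke Lemma~\ref{carbovet} to obtain (in polynomial time) a feasible move $\vec P^{i+1}$ whose resulting state $(\sigma^{i+1},{\cal R}^{i+1})$ again has no strong deadlock set, so that the hypothesis of Lemma~\ref{carbovet} is preserved for the next step. Since each feasible move decreases the potential by exactly one and the potential is a nonnegative integer, the iteration terminates after exactly $u(\sigma,{\cal R})$ steps at the state of potential $0$, namely the free network. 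This is a feasible sequence delivering every item, so $(\sigma,{\cal R})$ is safe. Combining the two directions gives the stated equivalence, and for the recognition claim I would appeal directly to Lemma~\ref{recognize}: detecting a strong deadlock set amounts to building the Follower Network $D(\sigma,{\cal R})$, computing its transitive closure, and testing whether $Free(\sigma)$ is a dominating set, all of which is polynomial.

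The hard part is already encapsulated in Lemma~\ref{carbovet} — the maximal-permissiveness / progress step guaranteeing a safety-preserving move whenever $b\geq 2$ and no strong deadlock set is present — which I am taking as given; relative to it the theorem is a routine induction on the potential. The only points that genuinely need care are the persistence (``freezing'') argument in the easy direction, the use of the monotone potential to certify termination of the greedy emptying procedure, and the boundary case $u(\sigma,{\cal R})=0$, where the network is already free and safety is immediate.
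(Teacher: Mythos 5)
Your proposal is correct and follows exactly the route the paper intends: the paper gives no separate proof of Theorem~\ref{fondcarbovet} but derives it from Lemma~\ref{recognize} and Lemma~\ref{carbovet} together with the potential-function argument of equation~\eqref{eq:potential_function}, which is precisely your combination of the freezing argument, the greedy safety-preserving iteration, and the dominating-set test. No gaps.
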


\section{Weak deadlock sets}\label{wds}

In order to go beyond the previous results we need to handle both vertices with $b(v) \geq 2$ and vertices with $b(v) = 1$.  The main intuition behind our results is that it is not ``wise'' to {\em leave} items on vertices with $b(v) = 1$. While in some cases this has to be done (we will provide an example) it is mostly wise that items only {\em transit} on such vertices and not {\em stay}. We start to grasp this idea with the following definition.

\begin{definition}[Wise Follower]
Let $(\sigma, {\cal R})$ be a state and $P$ a path in ${\cal R}(u)$ for some $u\in V:\sigma(u)>0$. A vertex $v$ is the {\em wise follower} of $u$ with respect to $P$, or simply a wise follower of $u$, if $v$ is the first vertex in $P$, $v \neq u$, that is either the destination of $P$ or such that $b(v)=1$ and $\sigma(v)=0$ do not both hold.
\end{definition}

When $b\geq 2$, the wise follower of $u$ with respect to a path $P\in {\cal R}(u)$ is the same as the follower of $u$ with respect to the same path. However that is not the case when $b(v)=1$, $v$ is free and $v$ is {\em not} the destination of $P$. In Figures~\ref{fig:weak_not_sufficient_double} and~\ref{secondex} the blue arcs point, for vertices $u$ with $\sigma(u)>0$, at their wise followers (note that in general a vertex may have more wise followers).

\smallskip
We are now ready to introduce our main new tool, the {\em weak deadlock set}. Weak deadlock sets can be thought as a relaxation of strong deadlock sets allowing to handle also some vertices $v$ with $b(v) = 1$. 

\begin{definition}[Weak Deadlock Set]
Given a pair $(G, b)$ and a state $(\sigma, {\cal R})$, a non-empty subset $Q\subseteq V$ defines a {\em weak deadlock set} for $(\sigma, {\cal R})$ if:
\begin{itemize}
\item $\sigma(u) = b(u)$, for each $u\in Q$;
\item for each $u\in Q$, each wise follower of $u$ belongs to $Q$.
\end{itemize}
\end{definition}

Weak deadlock sets are easy to recognize: this goes along the same lines as for strong deadlock sets so we postpone the details, see Section~\ref{recweak}. By now we observe that a strong deadlock set is always a weak deadlock set and that vice versa, when $b\geq 2$, a weak deadlock set is trivially strong. However, when $b$ is free a weak deadlock set is not necessarily strong: consider again the instance in Figure~\ref{secondex}: the set $\{A, C, E\}$ defines a weak deadlock set but not a strong one. 

The example in Figure~\ref{secondex} might suggest that a state with a weak deadlock set is bound to deadlock. However, that is not true, see Figure~\ref{fig:weak_not_sufficient_double}: the set $\{A, B, C\}$ defines a weak deadlock but the state is safe. Note that the graph in Figure~\ref{fig:weak_not_sufficient_double} is not a tree. As we show in the next theorem, that is not by chance.  

\begin{theorem}\label{cutlemma}
On trees, if for a state $(\sigma, {\cal R})$ there is a weak deadlock set, then the state is bound to deadlock.
\end{theorem}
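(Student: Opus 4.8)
The plan is to prove the contrapositive-flavored claim by showing that on a tree, a weak deadlock set $Q$ remains a weak deadlock set (or gives rise to one) no matter which feasible move we perform. That is, I want to establish that the property ``$(\sigma,\mathcal{R})$ has a weak deadlock set'' is invariant under feasible moves on trees. Once that invariance holds, every feasible sequence of moves keeps a weak deadlock set present; since the potential strictly decreases with each move and cannot reach $0$ while a nonempty saturated set persists, every feasible sequence must terminate in a state that still has a weak deadlock set. I will then argue that a terminal state with a weak deadlock set in fact has a \emph{strong} deadlock set (the items in $Q$ literally cannot move), so the state is bound to deadlock by definition.

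\textbf{Key steps.} First I would fix a weak deadlock set $Q$ for $(\sigma,\mathcal{R})$ and consider an arbitrary feasible move $\vec{P}$ shifting an item from $u$ to its follower $v$, producing $(\sigma',\mathcal{R}')$. I want to show $Q$ (possibly after removing the source vertex $u$, or after an analogous local repair) is still a weak deadlock set for the new state. The crucial observation is that every $u \in Q$ is saturated, so no item can move \emph{out} of $Q$: a feasible move must originate at some $u \notin Q$. The only way $Q$ could be damaged is if the move changes occupancies or wise-followers of vertices inside $Q$. Since the moving item starts outside $Q$ and all wise followers of $Q$-vertices lie in $Q$, I must check two things: (i) that the new item landing at some $v$ does not violate saturation (it can only increase occupancy, so saturation of $Q$ is preserved and $v$ may newly enter a candidate set), and (ii) that the wise-follower relation for vertices already in $Q$ is unaffected, which holds because wise followers of $u\in Q$ depend on occupancies along $P|u$, and here is where I would exploit the tree structure.

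\textbf{The tree structure} is what I expect to carry the real weight. On a tree, the simple path between any two vertices is unique, so routes through $Q$ are rigidly constrained: once a path enters the ``closed'' region induced by $Q$, it cannot leave and re-enter without repeating a vertex, which a simple path forbids. I would use this to argue that a move performed outside $Q$ cannot empty a vertex inside $Q$ nor redirect a wise follower of a $Q$-vertex to a point outside $Q$ — the uniqueness of paths prevents the kind of ``detour'' that the non-tree example in Figure~\ref{fig:weak_not_sufficient_double} relies on, where the cyclic structure lets one free slot propagate around and unlock the whole set. Concretely, I expect to show that a free slot created outside $Q$ cannot ``reach'' $Q$ along any route without first having to pass through a saturated $Q$-vertex.

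\textbf{The main obstacle} will be handling wise followers precisely rather than ordinary followers: a wise follower of $u$ can be several edges along $P$, skipping over free $b=1$ vertices, so when a move changes the occupancy of such an intermediate $b=1$ vertex, a wise follower could in principle jump or shift. I must verify that any such change caused by a move outside $Q$ either keeps the wise follower inside $Q$ or shortens it to a vertex that, by the tree argument, is itself forced into $Q$. The cleanest route is likely to pick a \emph{minimal} weak deadlock set or to track an invariant stronger than ``$Q$ is weak,'' perhaps maintaining that the subtree-region spanned by $Q$ stays saturated; I would set up the induction so that the wise-follower bookkeeping reduces to the single uniqueness-of-paths fact and the monotonicity of occupancy at the landing vertex. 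Once those two facts are locked down, the invariance and hence the theorem follow.
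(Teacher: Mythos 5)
Your plan hinges on the assertion that ``every $u \in Q$ is saturated, so no item can move \emph{out} of $Q$: a feasible move must originate at some $u \notin Q$.'' This is false, and it inverts the actual difficulty. Saturation of a vertex blocks moves \emph{into} it, not out of it: an item at a saturated $u\in Q$ can move whenever its \emph{follower} (not its wise follower) is free, and in a weak deadlock set that is not strong this is exactly what can happen --- the wise-follower relation skips over free $b=1$ vertices, and items in $Q$ can step onto them. This is precisely what distinguishes weak from strong deadlock sets, and it is how the non-tree example of Figure~\ref{fig:weak_not_sufficient_double} escapes: the first move there goes from $A \in Q$ to the free $b(E)=1$ vertex $E$. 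So the case you dismiss as impossible is the main case, and it is the one place where the paper's proof really uses the tree structure: for a feasible move from $u \in Q$ onto a free $b=1$ vertex $v$, it splits $T \setminus \{u,v\}$ into the component $X$ containing $v$ and the component $\overline{X}$ containing $u$, and shows that $\{v\} \cup (Q \cap X)$ is a weak deadlock set of the new state.

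The second problem is that the invariance you want --- after \emph{every} feasible move some (possibly repaired) weak deadlock set survives --- is simply false on trees, so the overall plan cannot be completed. Take the path $u, x_1, x_2, z$ with pendant vertices $w$ adjacent to $x_1$ and $f$ adjacent to $x_2$; let $b \equiv 2$ except $b(x_1)=b(x_2)=1$; let $u$ hold two items with route $(u,x_1,x_2,z)$, $z$ hold two items with route $(z,x_2,x_1,u)$, $w$ hold one item with route $(w,x_1,x_2,f)$, and $x_1,x_2,f$ be empty. Then $\{u,z\}$ is a weak deadlock set. Now move the item of $w$ onto $x_1$ (a move from outside $Q$): in the new state the wise follower of both $u$ and $z$ is $x_1$, and the wise follower of $x_1$ is its destination $f$, which is free; hence any candidate weak deadlock set would have to contain $x_1$ and therefore $f$, which is not saturated --- so \emph{no} weak deadlock set exists, even though the state is still bound to deadlock. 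The paper never claims your invariance; instead it neutralizes items outside $Q$ by a minimum-counterexample argument on the potential~\eqref{eq:potential_function}: in a minimum counterexample one may assume $\sigma(u)=0$ for all $u \notin Q$, because deleting one item at a vertex outside $Q$ preserves safety, preserves $Q$ as a weak deadlock set (a vertex holding an item cannot lie strictly between a $Q$-vertex and its wise follower), and decreases the potential. After that reduction every feasible move originates in $Q$, the tree-splitting claim applies, and the contradiction with minimality follows. Your proposal needs both of these ingredients and has neither.
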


\begin{proof}
Let $T(V,E)$ be a tree and $b:V(T)\mapsto {\cal Z}_+$. Let $Q$ be a weak deadlock set. We start with a claim about $Q$.
 
\smallskip
{\bf Claim:} {\em Either $Q$ is a strong deadlock set or each feasible move $\vec{P}$, for $P\in {\cal R}(u)$ and $u\in Q$, is such that in the state $\vec{P}(\sigma, {\cal R})$ there is again a weak deadlock.} {\bf Proof.} Let $u\in Q$ and $P\in {\cal R}(u)$ such that $\vec{P}$ is a feasible move ($Q$ is a strong deadlock if there are no such $u$ and $P$). Let $v$ be the follower of $u$ with respect to $P$. Since $Q$ is a weak deadlock set, it follows that $v$ is not the wise follower of $u$ with respect to $P$: so $\sigma(v) = 0$ and $b(v) = 1$.  For shortness, let $(\sigma', {\cal R}'):= \vec{P}(\sigma, {\cal R})$. Let $X$ and $\overline X$ the vertices of the connected components of the graph $T\setminus \{u, v\}$ (note that $T[X]$ and $T[\overline X]$ are both trees) with $u\in \overline X$ and $v\in X$. Note that $\sigma'(v) = 1$. Also note that $Q\cap X\neq\emptyset$, as the wise follower of $u$ with respect to $P$ is in $X$.

Now consider any vertex $z\in Q\cap X$ -- observe that ${\cal R}(z) = {\cal R}'(z)$ -- and consider a path $\Pi\in {\cal R}(z)$. Note that it is possible that the wise follower of $z$ with respect to $\Pi$ changes when moving from state $(\sigma, {\cal R})$ to state $(\sigma', {\cal R}')$. There are indeed two possibilities. In a former case, the wise follower with respect to $\Pi$ and  $(\sigma, {\cal R})$  was a vertex in $Q\cap X$: then either the wise follower stays the same also for $\Pi$ and state $(\sigma', {\cal R}')$ or the wise follower of $z$ with respect to $\Pi$ and $(\sigma', {\cal R}')$ is now $v$ (this happens if $v$ lies on $\Pi$ between $z$ and the previous wise follower). Otherwise, if for state $(\sigma, {\cal R})$ and $\Pi$ the wise follower of $z$ was a vertex in $Q\cap \overline X$, $v$ will be the wise follower of $z$ for $(\sigma', {\cal R}')$ and $\Pi$. 

In both cases, each wise follower of a vertex $z \in Q \cap X$ is in $\{v\} \cup (Q \cap X)$. Finally observe that, by construction, the (unique) wise follower of $v$ for $(\sigma', {\cal R}')$ is the vertex that was the wise follower of $v$ with respect to $P$ for $(\sigma, {\cal R})$ and such vertex belongs to $Q\cap X$. Therefore $\{v\}\cup (Q\cap X)$ is a weak deadlock set for  $(\sigma', {\cal R}')$. {\bf End of the proof of the claim.} 

Now suppose the statement of the lemma is not true, and assume in particular that $\{(T,b), (\sigma, {\cal R})\}$ defines a minimum counterexample with respect to the potential $u(\sigma, {\cal R})$ (see~\eqref{eq:potential_function}). Note that by hypothesis $(\sigma, {\cal R})$ is safe. Note also that we may assume that $\sigma(u) = 0$ for each $u\notin Q$ as otherwise we could remove just one item from $u$ and ${\cal R}(u)$ and define another instance $\{(T,b), (\sigma', {\cal R}')\}$, with $\sigma = \sigma - e_u$ and ${\cal R}'$ suitable, such that $(\sigma', {\cal R}')$ is still safe. We also claim that $Q$ still defines a weak deadlock set for the state $(\sigma', {\cal R}')$: note indeed that the wise follower $z$ of any $v\in Q$ with respect to some path $P\in {\cal R}(u)$ stays the same when moving from state $(\sigma, {\cal R})$ to state $(\sigma', {\cal R}')$. That is because $\sigma(u) > 0$ and $u\notin Q$ imply that the $u$ does not belong to the sub-path from $v$ to $z$ on $P$. But since $u(\sigma', {\cal R}') < u(\sigma, {\cal R})$, it follows that there is a contradiction. 

So we may assume that $\sigma(u) = 0$ for each $u\notin Q$. Therefore each feasible move $\vec{P}$ is such that $P\in {\cal R}(u)$ and $u\in Q$. In this case, following the claim, we first rule out that there exists a strong deadlock set since $(\sigma, {\cal R})$ is safe. But then any feasible move $\vec{P}$ is such that in the state $\vec{P}(\sigma, {\cal R})$ there is again a weak deadlock set. And this is a contradiction because at least one of these moves must lead to a safe state $((\sigma, {\cal R})$ is safe), and so there will be some state $\vec{P}(\sigma, {\cal R})$ that is again safe, has a weak deadlock set and has smaller potential than $(\sigma, {\cal R})$.
\end{proof}

\subsection{Recognizing weak deadlock}\label{recweak}
We now show how to recognize weak deadlock sets. We start with the following definition that generalizes Definition~\ref{StrongNetwork}.

\begin{definition}[Wise Follower Network]
Given a pair $(G, b)$ and a state $(\sigma, {\cal R})$, the Wise Follower Network $W(\sigma, {\cal R})$ is the directed network defined as follows:
\begin{itemize}
\item $V(\sigma, {\cal R}) = V$;
\item $E(W(\sigma, {\cal R})) = \{(u,v): u,v\in V$ and $v$ is a wise follower of $u\}$.
\end{itemize}
\end{definition}

Note that, if $b(v) \geq 2$ for each $v\in V$, then the Wise Network $W(\sigma, {\cal R})$ and the Wise Follower Network $D(\sigma, {\cal R})$ coincide. The next lemma provides a characterization for weak deadlock sets: it generalizes Lemma~\ref{recognize} and we provide a proof for the sake of completeness. Note that, following this characterization, weak deadlock sets can be recognized in polynomial time.

\begin{lemma}\label{bisrecognize}
There is a weak deadlock set for a state $(\sigma, {\cal R})$ if and only if $Free(\sigma)$ is not a dominating set for the transitive closure of the Wise Follower Network $W(\sigma, {\cal R})$. In particular, if $Free(\sigma)$ is not a dominating set, then the set of nodes that are not dominated by $Free(\sigma)$ defines a weak deadlock set for $(\sigma, {\cal R})$.
\end{lemma}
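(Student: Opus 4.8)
The plan is to prove both implications directly, mirroring the proof of Lemma~\ref{recognize} for strong deadlock sets but replacing the follower relation with the wise follower relation throughout. The workhorse is the transitive closure $W^*$ of $W(\sigma, {\cal R})$: recall that $(u,v)$ is an arc of $W^*$ exactly when there is a directed path $u = x_0, x_1, \ldots, x_k = v$ in $W(\sigma, {\cal R})$, i.e.\ a sequence in which each $x_{i+1}$ is a wise follower of $x_i$. I will repeatedly use that a vertex $u$ is dominated by $Free(\sigma)$ in $W^*$ precisely when either $u \in Free(\sigma)$ or some vertex of $Free(\sigma)$ is reachable from $u$ along wise followers.

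For the ``if'' direction, I would assume $Free(\sigma)$ is not a dominating set for $W^*$ and let $N$ be the (non-empty) set of vertices not dominated by $Free(\sigma)$. First, every $u \in N$ satisfies $u \notin Free(\sigma)$, hence $\sigma(u) = b(u)$, giving the first weak-deadlock condition. For the second condition, take $u \in N$ and let $v$ be any wise follower of $u$, so that $(u,v)$ is an arc of $W(\sigma, {\cal R}) \subseteq W^*$. If $v \in Free(\sigma)$ then $v$ would witness that $u$ is dominated, a contradiction; so $v$ is saturated. If moreover some $w \in Free(\sigma)$ were reachable from $v$ in $W^*$, then transitivity would give the arc $(u,w)$ of $W^*$ and again $u$ would be dominated; so $v$ is not dominated either, i.e.\ $v \in N$. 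Thus every wise follower of a vertex of $N$ lies in $N$, and $N$ is a weak deadlock set.

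For the ``only if'' direction, let $Q$ be a weak deadlock set; since $\sigma(u) = b(u)$ for each $u \in Q$ we have $Q \cap Free(\sigma) = \emptyset$. I claim no vertex of $Q$ is dominated by $Free(\sigma)$ in $W^*$, which (as $Q \neq \emptyset$) shows that $Free(\sigma)$ fails to dominate and in fact that $Q$ lies inside the non-dominated set. Suppose toward a contradiction that some $u \in Q$ is dominated, witnessed by a wise-follower path $u = x_0, x_1, \ldots, x_k = w$ with $w \in Free(\sigma)$. Since $x_0 \in Q$ and $x_{i+1}$ is a wise follower of $x_i$, the closure property of weak deadlock sets gives $x_1 \in Q$, and by induction $x_i \in Q$ for all $i$; in particular $w \in Q$, so $\sigma(w) = b(w)$, contradicting $w \in Free(\sigma)$.

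The argument is essentially routine once the transitive closure is set up correctly; the only point demanding care -- and the natural place an error could hide -- is the interplay between the single-step wise-follower arcs of $W(\sigma,{\cal R})$ and the reachability arcs of $W^*$, namely prepending one wise-follower step in the ``if'' direction and propagating an entire reachability path inside $Q$ in the ``only if'' direction. Checking that the wise-follower relation, which unlike the plain follower relation may skip over free size-one vertices, still yields genuine arcs of $W(\sigma,{\cal R})$ at every step is exactly what makes the generalization of Lemma~\ref{recognize} go through verbatim.
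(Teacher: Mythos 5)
Your proof is correct and takes essentially the same route as the paper's: both directions work with the transitive closure, using closure of a weak deadlock set under wise followers to show no arc can leave it toward $Free(\sigma)$, and taking the non-dominated vertices as the weak deadlock set for the converse. You merely spell out the induction along wise-follower paths that the paper compresses into ``by definition.''
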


\begin{proof}
Let $W^+(\sigma, {\cal R})$ denote the transitive closure of $W(\sigma, {\cal R})$. Suppose that $Q$ is a weak deadlock set for $(\sigma, {\cal R})$. Then, by definition, $Q\neq \emptyset$ and $Q\cap Free(\sigma) = \emptyset$. Moreover, there is no arc $(u, v)$ in $W^+(\sigma, {\cal R})$ such that $u\in Q$ and $v\in Free(\sigma)$. Hence $Free(\sigma)$ is not a dominating set for $W^+(\sigma, {\cal R})$.

Vice versa let $X$ be the set of nodes that either belong to $Free(\sigma)$ or are dominated in $W^+(\sigma, {\cal R})$ by some node in $Free(\sigma)$. By definition, the set $V\setminus X$, if non-empty, is a weak deadlock set. 
\end{proof}

\section{Wise States}\label{ws}
In the previous section we pointed out that often it is not ``wise'' to {\em leave} items on vertices with $b(v) = 1$. This motivates the following definition:

\begin{definition}[Wise State]
A state $(\sigma, {\cal R})$ is {\em wise} if there is no vertex $v\in V$ such that $\sigma(v)=b(v)=1$. 
\end{definition}

The state depicted in Figure~\ref{fig:weak_not_sufficient_double} is wise while the state depicted in Figure~\ref{secondex} is not wise. Note that, if $b(v) \geq 2$ for each $v\in V$, then any state is wise. Recall also that, still in the case that $b(v) \geq 2$ for each $v\in V$, a weak deadlock set is strong. The following lemma is then a generalization of Lemma~\ref{carbovet}.

\begin{lemma}\label{carbovet_riv}
Let $\{(G, b), (\sigma, {\cal R})\}$ be an instance of \textsc{dsp} with $(\sigma, {\cal R})$ wise. If $(\sigma, {\cal R})$ has no weak deadlock sets, then it is possible to find in polynomial time a feasible sequence of moves ${\vec {\cal P}}$,  such that ${\vec {\cal P}}(\sigma, {\cal R})$ is again a state wise and without weak deadlock sets.
\end{lemma}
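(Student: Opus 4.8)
The plan is to follow the proof of Lemma~\ref{carbovet}, replacing a single follower-move by a \emph{wise move} --- a short feasible sequence that transits one item from a saturated vertex to its wise follower --- and replacing the Follower Network and strong deadlock sets by the Wise Follower Network and weak deadlock sets through Lemma~\ref{bisrecognize}. First I would record the structural consequences of wiseness: every saturated vertex $u$ (i.e.\ $\sigma(u)=b(u)$) has $b(u)\geq 2$, every size-$1$ vertex belongs to $Free(\sigma)$, every wise follower is either a destination or a vertex with $b\geq 2$, and the vertices strictly between a vertex and one of its wise followers are size-$1$ empty vertices. By Lemma~\ref{bisrecognize} the hypothesis ``no weak deadlock set'' is equivalent to ``$Free(\sigma)$ dominates $W^+(\sigma,{\cal R})$''. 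Taking a shortest directed path in $W(\sigma,{\cal R})$ from a saturated vertex to $Free(\sigma)$, its last arc yields a saturated vertex $u$ together with a wise follower $w\in Free(\sigma)$, which is the wise-move analogue of a saturated vertex with a free follower.

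I would then realize the arc $(u,w)$ as an explicit feasible sequence $\vec{\cal P}$. Let $P\in{\cal R}(u)$ realize $w$ as a wise follower and let $s_1,\dots,s_k$ be the (size-$1$, empty) vertices strictly between $u$ and $w$ on $P$; I push one item step by step along $u\to s_1\to\cdots\to s_k\to w$. Each step is feasible, since each $s_i$ is still free when the item reaches it and $w$ is free at the end. After the last step the vertices $s_1,\dots,s_k$ are empty again, $u$ has lost an item and $w$ has gained one (or the item reached its destination $w$, occupying no slot). As $u$ and $w$ have $b\geq 2$ (or $w$ is a destination), no size-$1$ vertex is occupied in $(\sigma',{\cal R}'):=\vec{\cal P}(\sigma,{\cal R})$, so this state is again wise.

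The crux is to prove that $(\sigma',{\cal R}')$ has no weak deadlock set, equivalently that $Free(\sigma')$ dominates $W^+(\sigma',{\cal R}')$. Here I would use a \emph{skip-pattern invariance}: the only vertices whose occupancy changed, $u$ and $w$, have size $\geq 2$, whereas the skip test in the definition of wise follower depends only on which size-$1$ vertices are empty; hence the family of size-$1$ empty vertices is unchanged, and $W(\sigma',{\cal R}')$ differs from $W(\sigma,{\cal R})$ \emph{only} through the rerouting of the moved item, namely the copy of the arc $(u,w)$ coming from $P$ is replaced by an arc $(w,w')$, where $w'$ is the wise follower of $w$ along $P|w$. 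This is precisely how a single follower-move transforms the Follower Network when $b\geq 2$, and the saturation changes ($u$ leaving, possibly $w$ entering the saturated set, both of size $\geq 2$) match those in Lemma~\ref{carbovet}. I would therefore re-run the domination-maintenance argument of Lemma~\ref{carbovet} on $W$ in place of $D$, which is legitimate because all saturated vertices have $b\geq 2$ and are thus exactly the vertices that argument controls.

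I expect this domination-maintenance step to be the main obstacle, just as in~\cite{blazewicz_1994}: not every wise move preserves the absence of weak deadlock sets, so the arc $(u,w)$ must be chosen as carefully as in the proof of Lemma~\ref{carbovet}, for instance from a suitable shortest path so that every saturated vertex whose domination path used the deleted copy of $(u,w)$ is instead routed through $u\in Free(\sigma')$ or through the new arc $(w,w')$. Tracking how deleting $(u,w)$ and inserting $(w,w')$ affects domination in the transitive closure is the delicate part, but by the skip-pattern invariance it stays confined to the size-$\geq 2$ vertices, so the $b\geq 2$ analysis transfers. Finally, building $W(\sigma,{\cal R})$ and its transitive closure, testing domination, finding the shortest path to $Free(\sigma)$, and performing the single transit are all polynomial, so the whole procedure runs in polynomial time.
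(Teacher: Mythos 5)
Your overall architecture matches the paper's: Lemma~\ref{bisrecognize} translates ``no weak deadlock set'' into domination of $W^+(\sigma,{\cal R})$ by $Free(\sigma)$; an arc $(u,w)$ of $W(\sigma,{\cal R})$ with $w\in Free(\sigma)$ is realized as a feasible transit sequence through the empty size-$1$ vertices (this is exactly the paper's Lemma~\ref{wisesubpath}); and your ``skip-pattern invariance'' is precisely the paper's observation that $E(W(\sigma, {\cal R}))\oplus E(W(\sigma', {\cal R}'))\subseteq \{(u,w),(w,w')\}$, because all occupancy changes happen at vertices of size at least $2$. The genuine gap is at the crux, the domination-maintenance step. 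Your concrete mechanism handles only the deletion side: domination paths that used the removed copy of $(u,w)$ can be truncated at $u$, which is free in the new state. That is the paper's easy case, where $w$ remains free (because $\sigma(w)\leq b(w)-2$ or $w$ is the destination). The hard case, which no choice of shortest path fixes, is when the move \emph{saturates} $w$: then every vertex whose only free dominator in $W^+(\sigma,{\cal R})$ was $w$ itself (possibly including $w$) loses its certificate, and rerouting ``through $u$ or through the new arc $(w,w')$'' is not available in general, since neither $u$ nor $w'$ need be reachable from those vertices.

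The paper resolves that case not by choosing the path more carefully but by changing which vertex the item is moved \emph{from}. Since $w$ saturates, one gets $b(w)\geq 2$ and $\sigma(w)>0$, so $w$ has a wise follower $z$, and the arc $(w,z)$ persists after the move. If $z$ is free, or is dominated in $W^+(\sigma,{\cal R})$ by a free vertex other than $w$, domination survives. Otherwise $z$'s only domination path ends at $w$, and its last arc $(v',w)$ identifies a vertex $v'$ with $\sigma(v')>0$ and $b(v')\geq 2$ whose wise follower is $w$; the paper then performs the move from $v'$ into $w$ instead of from $u$. The freed vertex $v'$ now lies on the surviving path from $z$, so $z$, $w$, and everything formerly dominated through $w$ is re-dominated via $w\to z\to\cdots\to v'$. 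Your appeal to ``re-running the argument of Lemma~\ref{carbovet}'' gestures at this, but Lemma~\ref{carbovet} is only cited, never proved, in the paper, so the proposal does not stand on its own, and the one mechanism you do spell out (shortest path plus truncation at $u$) provably does not cover this sub-case. A smaller point: when no vertex is saturated your starting recipe (``shortest path from a saturated vertex'') is undefined, although then any occupied vertex together with any of its wise followers works, since every non-free vertex is saturated.
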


The proof of Lemma~\ref{carbovet_riv} goes along the same lines of the proof of Theorem 1 in~\cite{blazewicz_1994}: however a quite careful tuning is needed to suitably address wise states and weak deadlock sets. In particular, we need an additional lemma that gives more detail about the feasible sequence of moves ${\vec {\cal P}}$ in the statement of Lemma~\ref{carbovet_riv}. We state and prove this lemma before moving to the proof of Lemma~\ref{carbovet_riv}.

\begin{lemma}\label{wisesubpath}
Let $(\sigma, {\cal R})$ be a state and $u$ and $v\in V$ such that $v$ is the wise follower of $u$ with respect to a path $P\in {\cal R}(u)$ and $v\in Free(\sigma)$. Then there exists a feasible sequence of moves ${\vec {\cal P}}$ such that the followings hold:
\begin{itemize}
\item $\sigma'(u) = \sigma(u)-1$; $\sigma'(z) = \sigma(z)$, for each $z\notin\{u, v\}$;
\item $\sigma'(v) = \sigma(v) + 1$ if $v$ is not the destination of $P$, else $\sigma'(v) = \sigma(v)$;
\item ${\cal R}'(u) = {\cal R}(u)\setminus P$; ${\cal R}'(z) = {\cal R}(z)$ for each $z\notin\{u, v\}$;
\item ${\cal R}'(v) = {\cal R}(v)\cup \{P|v\}$ if $v$ is not the destination of $P$, else ${\cal R}'(v) = {\cal R}(v)$.
\end{itemize}
where we let $(\sigma', {\cal R}'):={\vec {\cal P}}(\sigma, {\cal R})$ and denote by $P|v$ is the path obtained from $P$ by removing the sub-path from $u$ to $v$.
\end{lemma}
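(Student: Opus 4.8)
The plan is to realize the single ``wise jump'' from $u$ to its wise follower $v$ as an explicit sequence of elementary feasible moves that push one item along $P$, one edge at a time, through all the intermediate vertices. Write the initial segment of $P$ as $u = v_0, v_1, \ldots, v_m = v$, where $v_m = v$ is the wise follower of $u$ with respect to $P$. By the definition of wise follower, each intermediate vertex $v_i$ with $1 \le i \le m-1$ is \emph{not} the destination of $P$ and satisfies both $b(v_i) = 1$ and $\sigma(v_i) = 0$; in particular ${\cal R}(v_i) = \emptyset$, since it is a collection of $\sigma(v_i) = 0$ paths. Because $P$ is simple, the vertices $v_0, \ldots, v_m$ are pairwise distinct, and none of them coincides with $v$ except $v_m$ itself.

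I would then define $\vec{\cal P} := (\vec{P}^1, \ldots, \vec{P}^m)$ by letting $\vec{P}^i$ be the move that shifts the item from $v_{i-1}$ to $v_i$ along the sub-path of $P$ currently rooted at $v_{i-1}$, namely $P|v_{i-1}$ (with $P|v_0 = P$). The key point is to verify that this sequence is feasible, which I would do by induction: after the first $i-1$ moves there is exactly one item sitting at $v_{i-1}$, it carries the sub-path $P|v_{i-1}$ whose follower is $v_i$, and the vertices $v_i, \ldots, v_m$ are still untouched. For $1 \le i \le m-1$ the target $v_i$ has $\sigma(v_i) = 0 < 1 = b(v_i)$ at that moment, and for $i = m$ the target $v = v_m$ still has a free slot, since $v \in Free(\sigma)$ originally and no earlier move touched it; hence each $\vec{P}^i$ is a feasible move in the sense of Definition~\ref{move}. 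Note also that each intermediate $v_i$ is not the destination of $P$, so moving the item onto it sets $\sigma(v_i) = 1$ and appends $P|v_i$ to ${\cal R}(v_i)$, which the very next move then removes.

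Finally I would read off the resulting state $(\sigma', {\cal R}') := \vec{\cal P}(\sigma, {\cal R})$ and match it against the four claimed bullet points. The crucial bookkeeping observation is that every intermediate vertex $v_i$ ($1 \le i \le m-1$) first receives and then immediately relinquishes the item, so it returns to $\sigma'(v_i) = 0$ and ${\cal R}'(v_i) = \emptyset$, exactly its original configuration; combined with the fact that moves $\vec{P}^i$ touch no vertex outside $\{v_0,\ldots,v_m\}$, this yields $\sigma'(z) = \sigma(z)$ and ${\cal R}'(z) = {\cal R}(z)$ for every $z \notin \{u, v\}$. At $u = v_0$ we lose one item and the path $P$, giving $\sigma'(u) = \sigma(u) - 1$ and ${\cal R}'(u) = {\cal R}(u) \setminus P$. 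At $v = v_m$, if $v$ is not the destination of $P$ the last move deposits the item together with the tail $P|v$, so $\sigma'(v) = \sigma(v) + 1$ and ${\cal R}'(v) = {\cal R}(v) \cup \{P|v\}$, while if $v$ is the destination the item vanishes and $v$ is left unchanged. The degenerate case $m = 1$, where $v$ is already the ordinary follower of $u$, is worth noting: there $\vec{\cal P}$ collapses to the single move $\vec{P}$ and the statement holds trivially.

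The work here is not conceptually deep; the main obstacle is making the inductive feasibility argument airtight, in particular confirming that each pass-through vertex is genuinely empty precisely when the item arrives (so that no move ever collides with an occupied $b=1$ slot) and that the temporary occupancy of every $v_i$ is fully undone, so that $\sigma'$ and ${\cal R}'$ agree with the stated values on \emph{all} of $V \setminus \{u,v\}$ rather than merely on the vertices off $P$.
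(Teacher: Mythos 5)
Your proof is correct and takes essentially the same approach as the paper's: the paper likewise writes the segment of $P$ from $u$ to its wise follower, observes that every intermediate vertex has $b=1$ and $\sigma=0$, and inductively associates a feasible move with each edge of that segment. You simply make explicit the inductive feasibility check and the bookkeeping (each pass-through vertex reverting to its original empty configuration) that the paper leaves implicit.
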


\begin{proof}
Suppose that $v$ is the wise follower of $u$ with respect to some path $P\in {\cal R}(u)$, namely $P  = \{u\equiv u_0, u_1, \ldots, u_k \equiv v, u_{k+1}, \ldots, u_q\}$ for some $k\geq 1$ and $q\geq k$. Note that, if $k>1$, then each vertex $z\in \{u_1, \ldots, u_{k-1}\}$ is such that $b(z)=1$ and $\sigma(z)=0$. We may inductively associate with each edge $\{u_{i-1}, u_i\}$, $i=1..k$, a feasible move so as to define a feasible sequence of $k$ moves such that the above statements hold.
\end{proof}

We are now ready for the proof of Lemma~\ref{carbovet_riv} that for the sake of clarity we report from scratch.

\begin{proof}
Let $W^+(\sigma, {\cal R})$ denote the transitive closure of $W(\sigma, {\cal R})$. Since $(\sigma, {\cal R})$ has no weak deadlock set, it follows from Lemma~\ref{bisrecognize} that $Free(\sigma)$ is a dominating set for $W^+(\sigma, {\cal R})$. Since $(\sigma, {\cal R})$ is different from the empty state, it follows that there exist $u\in V$ and $v\in Free(\sigma)$ such that $v$ is the wise follower of $u$ with respect to some path $P\in {\cal R}(u)$. Note also that, since $(\sigma, {\cal R})$ is wise, it follows that $b(u)\geq 2$. Following Lemma~\ref{wisesubpath}, there exists a feasible sequence of moves ${\vec {\cal P}}$ such that the followings hold for the state $(\sigma', {\cal R}'):={\vec {\cal P}}(\sigma, {\cal R})$:
\begin{itemize}
\item $\sigma'(u) = \sigma(u)-1$; $\sigma'(z) = \sigma(z)$, for each $z\notin\{u, v\}$;
\item $\sigma'(v) = \sigma(v) + 1$ if $v$ is not the destination of $P$, else $\sigma'(v) = \sigma(v)$.
\end{itemize}

In the following, we let $W^+(\sigma', {\cal R}')$ denote the transitive closure of $W(\sigma', {\cal R}')$. By the definition of wise follower, $\sigma'$ is wise. Also, depending whether $v$ is the destination of $P$ or not, either $E(W(\sigma, {\cal R})) \oplus E(W(\sigma', {\cal R}')) \subseteq \textcolor{blue}{\{}\{u, v\}\textcolor{blue}{\{}$ or $E(W(\sigma, {\cal R}))\oplus E(W(\sigma', {\cal R}'))\subseteq \{\{u, v\}, \{v, w\}\}$ for some suitable $w\in V$. We now delve into two cases: $(i)$ $\sigma'(v) < b(v)$; $(ii)$ $\sigma'(v) = b(v)$.

$(i)$ Note that this happens if $\sigma(v) \leq b(v)-2$ or $v$ is the destination of $P$. In both cases, $Free(\sigma') = Free(\sigma) \cup \{u\}$. Note also that, since $b(u)\geq 2$, it follows that for each path $\Pi$ in $W(\sigma, {\cal R})$ that is using the arc $\{u, v\}$, the sub-path of $\Pi$ ``terminated'' at $u$ is a path in $W(\sigma', {\cal R}')$ (while this would not be the case if $b(u)= 1$, because of the definition of wise follower). Hence $Free(\sigma')$ is a dominating set for $W^+(\sigma', {\cal R}')$, and therefore $(\sigma', {\cal R}')$ has no weak deadlock set.

$(ii)$ In this case, $v\notin Free(\sigma')$. Following the discussion of the previous case, one easily realizes that we must deal with those vertices that in $W^+(\sigma, {\cal R})$ are dominated by $v$, possibly including $v$ itself. However, since $v$ is a wise follower of $u$ and $\sigma(v) = b(v)-1$, it follows that $b(v)\geq 2$ and $\sigma(v) > 0$. Therefore, there exists some vertex $z\in V$ such that $z$ is a wise follower of $v$ with respect to $(\sigma, {\cal R})$: note that $z$ is if a wise follower of $v$ also with respect to $(\sigma', {\cal R}')$. Since $Free(\sigma)$ was a dominating set for $W^+(\sigma, {\cal R})$ it follows that either $z\in Free(\sigma)$ or $(z,w)\in E(W^+(\sigma, {\cal R}))$ for some vertex $w\in Free(\sigma)$. We first suppose that $z\in Free(\sigma)$: in this case $z\in Free(\sigma')$ too, $Free(\sigma')$ is a dominating set for $W^+(\sigma', {\cal R}')$ and so $(\sigma', {\cal R}')$ has no weak deadlock set. The same conclusions hold if $(z,w)\in E(W^+(\sigma, {\cal R}))$ for some vertex $w\in Free(\sigma')$: note that in this case $w\neq v$, while it is possible that $w = u$.

Therefore, from now on we assume that $z\notin Free(\sigma)$ and that the only node $w\in Free(\sigma)$ such that $(z,w)\in E(W^+(\sigma, {\cal R}))$ is $v$ itself, as $v$ is the only vertex that belongs to $Free(\sigma)$ but not to $Free(\sigma')$. It follows that there exists $v'$ such that $\{v, v'\}\in E(W^+(\sigma, {\cal R}))$ and $\{v', v\}\in E(W(\sigma, {\cal R}))$, i.e., $v$ is a wise follower of $v'$ with respect to $(\sigma, {\cal R})$ (it is possibly the case that $v' = z$). Note that by definition $b(v')\geq 2$ and $\sigma(v') > 0$. But then we could have chosen $v'$ instead of $u$ (cf. the beginning of the proof) and defined $(\sigma', {\cal R}')$ consequently. Following again the proof above, it is easy to see that, with this choice, the corresponding state $(\sigma', {\cal R}')$ would have been still wise but also such that $Free(\sigma')$ is a dominating set for $W^+(\sigma', {\cal R}')$; and so $(\sigma', {\cal R}')$ would have had no weak deadlock set.

In order to complete the proof it remains to show that we always can find in polynomial time the sequence of moves ${\vec {\cal P}}$ we refer to in the statement. That is indeed the case as it is indeed enough to first choose a vertex $v$ such that $v$ a wise follower and $v\in Free(\sigma)$. Then it is enough to enumerate on the vertices $u$ such that $v$ is a wise follower of $u$ and check, for each such pair $(u, v)$, whether the sequence of moves ${\vec {\cal P}}$ ``induced'' by $(u,v)$ is such that $Free({\vec {\cal P}}(\sigma^0, {\cal R}))$ is a dominating set for the transitive closure of $W({\vec {\cal P}}(\sigma, {\cal R}))$.
\end{proof}

\begin{remark}\label{alsononwise}
A careful analysis of the above proof shows that Lemma~\ref{carbovet_riv} extends to the case where we do not ask $(\sigma, {\cal R})$ to be wise but simply such that no vertex $u$ with $\sigma(u)=b(u)=1$ is a wise follower of another vertex. When $u$ is the wise follower of a vertex $w$ and $\sigma(u)=b(u)=1$, then as soon as we move the item from $u$, then $u$ will be free but not anymore the wise follower of $w$: then we do not have any guarantee that the set of free vertices will be a dominating set for the transitive closure of $W(\sigma', {\cal R}')$.
\end{remark}

\begin{theorem}\label{fondcarbovet_riv}
Let $\{(G, b), (\sigma, {\cal R})\}$ be an instance of \textsc{dsp} such that $(\sigma, {\cal R})$ is a wise state. We can recognize in polynomial time whether $(\sigma, {\cal R})$ has weak deadlock sets and, if not, $(\sigma, {\cal R})$ is safe.
\end{theorem}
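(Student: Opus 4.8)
The plan is to split the statement into its two assertions and dispatch each with one of the lemmas already established. For the recognition claim I would invoke Lemma~\ref{bisrecognize} directly: build the Wise Follower Network $W(\sigma, {\cal R})$, compute its transitive closure $W^+(\sigma, {\cal R})$, and test whether $Free(\sigma)$ is a dominating set of $W^+(\sigma, {\cal R})$. Each of these three operations is polynomial in the size of the instance, and by Lemma~\ref{bisrecognize} the test decides exactly whether a weak deadlock set exists (and, when $Free(\sigma)$ fails to dominate, the set of non-dominated nodes exhibits one). This settles the recognition part.

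For the safety part, the idea is to use Lemma~\ref{carbovet_riv} as an engine and the potential $u(\sigma, {\cal R})$ from~\eqref{eq:potential_function} as a termination measure, building a feasible sequence of moves that empties the network. I would set $(\sigma^0, {\cal R}^0) := (\sigma, {\cal R})$ and maintain the invariant that every state $(\sigma^i, {\cal R}^i)$ produced is wise and has no weak deadlock set; this holds at $i=0$ by hypothesis. As long as the current state is non-empty, i.e. $u(\sigma^i, {\cal R}^i) > 0$, Lemma~\ref{carbovet_riv} returns in polynomial time a feasible sequence $\vec{\cal P}^i$ carrying $(\sigma^i, {\cal R}^i)$ to a state $(\sigma^{i+1}, {\cal R}^{i+1})$ that is again wise and weak-deadlock-free, so the invariant is preserved. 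The crucial point is that $\vec{\cal P}^i$ is non-empty whenever the state is non-empty: the proof of Lemma~\ref{carbovet_riv} exhibits an explicit first move, namely a wise follower $v \in Free(\sigma^i)$ of some vertex $u$ with $b(u)\geq 2$, so at least one move is performed.

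Since each individual feasible move decreases the potential by exactly one, we get $u(\sigma^{i+1}, {\cal R}^{i+1}) < u(\sigma^i, {\cal R}^i)$. The potential is a nonnegative integer, so after finitely many iterations — at most $u(\sigma, {\cal R})$, which is polynomial in the input — we reach a state of potential $0$, that is $\sigma^N(v) = 0$ for every $v \in V$, so the network has been freed. Concatenating $\vec{\cal P}^0, \vec{\cal P}^1, \ldots, \vec{\cal P}^{N-1}$ yields a single feasible sequence of moves from $(\sigma, {\cal R})$ that delivers every item at its destination. A fully emptied state has no strong deadlock set, hence this feasible sequence does not terminate in a deadlock; therefore $(\sigma, {\cal R})$ is not bound to deadlock, i.e. it is \emph{safe}.

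I expect the only delicate point to be confirming that the two invariants — wiseness and absence of weak deadlock sets — are genuinely preserved at every step, together with the strict decrease of the potential, since these are exactly what prevent the process from stalling in a deadlock before the network is emptied. However, this is precisely what Lemma~\ref{carbovet_riv} guarantees, so the argument reduces to a clean induction on the potential, with the one extra observation that the sequence supplied by that lemma is non-empty on any non-empty state; no new combinatorial difficulty should arise.
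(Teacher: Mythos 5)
Your proposal is correct and takes essentially the same route as the paper's own proof: Lemma~\ref{bisrecognize} settles recognition in polynomial time, and iterating Lemma~\ref{carbovet_riv} with the potential~\eqref{eq:potential_function} as a strictly decreasing nonnegative integer measure produces a feasible sequence that empties the network, hence safety; your extra observation that each sequence supplied by Lemma~\ref{carbovet_riv} is non-empty on a non-empty state is a welcome point the paper leaves implicit. One inessential slip: the potential, and hence the number of iterations, need \emph{not} be polynomially bounded in the input size (item multiplicities can be exponential, as the paper's third concluding research question notes), but since safety only requires finiteness and the theorem's polynomial-time claim concerns recognition only, this does not affect correctness.
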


\begin{proof}
We can check whether $(\sigma, {\cal R})$ has weak deadlock sets in polynomial time according to Lemma~\ref{bisrecognize}. Suppose now that $\{(G, b), (\sigma, {\cal R})\}$ has no weak deadlock sets. According to Lemma~\ref{carbovet_riv} there exist a feasible sequence of moves ${\vec {\cal P}}$ such that the state ${\vec {\cal P}}(\sigma, {\cal R})$ is wise and without weak deadlock sets. Note also that ${\vec {\cal P}}(\sigma, {\cal R})$ has smaller potential than $(\sigma, {\cal R})$; it follows that if we iterate this argument and define a sequence of states that has to end up in the state with potential 0. Therefore $(\sigma, {\cal R})$ is safe.
\end{proof}

\smallskip
Differently from Theorem~\ref{fondcarbovet}, Theorem~\ref{fondcarbovet_riv} does not require that $b\geq 2$ but it requires that the starting state is wise. Moreover, while Theorem~\ref{fondcarbovet} provides conditions that are necessary and sufficient, Theorem~\ref{fondcarbovet_riv} provides only sufficient conditions for a wise state to be safe. There are indeed wise states $(\sigma, {\cal R})$ that have weak deadlock and nevertheless are safe, see again Figure~\ref{fig:weak_not_sufficient_double}. However, as we already discussed -- see Theorem~\ref{cutlemma} -- that does not happen on trees. We may therefore state the following:

\begin{theorem}\label{fondtrees}
Let $\{(T, b), (\sigma, {\cal R})\}$ be an instance of \textsc{dsp} such that $T$ is a tree and $(\sigma, {\cal R})$ is wise. The state $(\sigma, {\cal R})$ is bound to deadlock if and only if there is a weak deadlock set and this can be recognized in polynomial time.  
\end{theorem}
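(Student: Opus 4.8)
The plan is to obtain Theorem~\ref{fondtrees} as a direct corollary of the two main results already established, namely Theorem~\ref{cutlemma} and Theorem~\ref{fondcarbovet_riv}, together with the recognition result of Lemma~\ref{bisrecognize}. Since ``safe'' is by definition the negation of ``bound to deadlock'', I would split the biconditional into its two implications and read each one off from an earlier statement, then handle the complexity claim separately.

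For the ``if'' direction I would invoke Theorem~\ref{cutlemma} verbatim: on a tree, the mere existence of a weak deadlock set for $(\sigma, {\cal R})$ already forces the state to be bound to deadlock. Note that this implication does not even use the wise hypothesis; it is the other direction that will.

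For the ``only if'' direction I would argue by contraposition. Suppose $(\sigma, {\cal R})$ has no weak deadlock set. Because $(\sigma, {\cal R})$ is assumed wise and a tree is in particular a general network, Theorem~\ref{fondcarbovet_riv} applies and yields that $(\sigma, {\cal R})$ is safe, i.e.\ not bound to deadlock. Contrapositively, a bound-to-deadlock wise state must admit a weak deadlock set. Combined with the previous paragraph this gives the claimed equivalence. The only point requiring a moment of care is to confirm that the wise hypothesis of Theorem~\ref{fondtrees} is precisely what Theorem~\ref{fondcarbovet_riv} requires, and that the latter was proved for arbitrary $(G,b)$, so that specializing to trees causes no trouble; it is also worth highlighting that wiseness is genuinely needed here (it is the hypothesis that fails to carry over from $b\geq 2$), even though it played no role in the ``if'' direction.

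Finally, the polynomial-time claim follows from Lemma~\ref{bisrecognize}: deciding whether a weak deadlock set exists amounts to testing whether $Free(\sigma)$ dominates the transitive closure of the Wise Follower Network $W(\sigma, {\cal R})$, which is a polynomial-time computation; by the equivalence just established this is the same as deciding whether the wise state is bound to deadlock. Since every ingredient is already in hand, there is no genuine obstacle left in this proof itself — the technical weight lies entirely in Theorem~\ref{cutlemma} (the tree-specific persistence of weak deadlock sets under feasible moves) and in the careful case analysis behind Theorem~\ref{fondcarbovet_riv}. Assembling them into the stated characterization is routine.
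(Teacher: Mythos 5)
Your proposal is correct and is essentially the paper's own proof, which is exactly this combination: Theorem~\ref{cutlemma} for the ``if'' direction (weak deadlock set implies bound to deadlock on trees), the safety guarantee behind Theorem~\ref{fondcarbovet_riv} for the ``only if'' direction (wise with no weak deadlock set implies safe), and Lemma~\ref{bisrecognize} for the polynomial-time recognition. The only cosmetic difference is that you cite Theorem~\ref{fondcarbovet_riv} as a black box, while the paper's one-line proof speaks of following that theorem's argument ``along the same lines''; the content is identical.
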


\begin{proof}
The proof goes along the same lines of the proof for Theorem~\ref{fondcarbovet_riv} but it also exploits Theorem~\ref{cutlemma}.
\end{proof}

Building upon Remark~\ref{alsononwise}, we may indeed extend the previous theorem as to deal with non-wise states with a particular structure, namely such that the condition $\sigma(v)=b(v)=1$ is allowed only at the leaves of the tree.

\begin{theorem}\label{fondtreesbis}
Let $\{(T, b), (\sigma, {\cal R})\}$ be an instance of \textsc{dsp} such that $T$ is a tree and $\sigma(v)=b(v)=1$ does not hold at any interval vertex. The state $(\sigma, {\cal R})$ is bound to deadlock if and only if there is a weak deadlock set and this can be recognized in polynomial time.  
\end{theorem}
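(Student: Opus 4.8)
I would split the statement into its two implications together with the polynomial-time claim, and note at once that one implication is free. \textbf{If} $(\sigma,{\cal R})$ has a weak deadlock set, then it is bound to deadlock: this is exactly Theorem~\ref{cutlemma}, whose proof uses only that the underlying graph is a tree and never appeals to wiseness. Hence it applies verbatim to our possibly non-wise state, and all the work lies in the converse: assuming $(\sigma,{\cal R})$ has no weak deadlock set (and that $\sigma(v)=b(v)=1$ holds at no internal vertex), show that $(\sigma,{\cal R})$ is safe.

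For the converse the plan is to reproduce the potential-decreasing scheme of Theorem~\ref{fondcarbovet_riv}: repeatedly apply a progress step to reach a state of strictly smaller potential that still has no weak deadlock set, until the empty state is reached, certifying safety. The progress step in Theorem~\ref{fondcarbovet_riv} is Lemma~\ref{carbovet_riv}, which assumes wiseness; here a leaf may carry an item in a unit buffer, so I would instead route the progress step through the relaxation announced in Remark~\ref{alsononwise}. That remark extends Lemma~\ref{carbovet_riv} to non-wise states in which no vertex $u$ with $\sigma(u)=b(u)=1$ is a wise follower of another vertex ``in the harmful way'' — the way that deletes an incoming arc of the Wise Follower Network once the item leaves $u$.

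The crux is a structural observation that renders this relaxation automatic on trees. Let $u$ be any vertex with $\sigma(u)=b(u)=1$; by hypothesis $u$ is a leaf, so it has a single neighbour. If $u$ were a wise follower of some $w$ with respect to a path $P\in{\cal R}(w)$, then $u$ would occur on the simple path $P$ strictly after $w$; since a leaf cannot be an internal vertex of a simple path, $u$ must be the \emph{destination} of $P$. Thus on a tree an occupied unit-buffer vertex can be a wise follower only as a destination, and the corresponding arc $(w,u)$ of $W(\sigma,{\cal R})$ is present solely because $u$ is the destination of $P$ — a property invariant under every feasible move that does not touch $P$, in particular under moving the item sitting at $u$ along its own route (cf. Lemma~\ref{wisesubpath}). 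Hence relocating the item out of a unit leaf never destroys its incoming arcs, which is precisely the failure mode excluded by Remark~\ref{alsononwise}, and the sequence of moves keeps $Free(\cdot)$ a dominating set for the transitive closure of the Wise Follower Network.

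I expect the main obstacle to be the single place in the proof of Lemma~\ref{carbovet_riv} where $b(u)\ge 2$ is genuinely used, namely the arc-preservation argument at the chosen source: ``for each path $\Pi$ using the arc $\{u,v\}$, the sub-path terminated at $u$ remains a path in $W(\sigma',{\cal R}')$.'' I would re-run this step allowing a unit-buffer leaf as source, and show it still goes through because, by the structural observation above, every incoming wise-follower arc of such a leaf is a destination arc and therefore survives the move; consequently $Free(\sigma')$ stays dominating even though $\sigma'(u)=0$ with $b(u)=1$. One must also check that the leaf-only condition is preserved along the way, which holds since a move into a wise follower $v$ with $b(v)=1$ can only occur when $v$ is a destination (so $\sigma'(v)=\sigma(v)=0$), and moves into vertices with $b(v)\ge 2$ cannot create a saturated unit buffer. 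With the progress step secured, safety follows from the potential argument of Theorem~\ref{fondcarbovet_riv}, and polynomial-time recognition of weak deadlock sets is provided directly by Lemma~\ref{bisrecognize}.
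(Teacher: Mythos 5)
Your proof is correct, and it follows the route the paper itself sketches: Theorem~\ref{cutlemma} (which indeed never uses wiseness) for the ``weak deadlock set implies bound to deadlock'' direction, Lemma~\ref{bisrecognize} for polynomial-time recognition, and, for the converse, a progress step in the spirit of Lemma~\ref{carbovet_riv}/Remark~\ref{alsononwise} combined with the potential argument of Theorem~\ref{fondcarbovet_riv}. Note, however, that the paper offers no actual proof of Theorem~\ref{fondtreesbis} -- only the sentence ``building upon Remark~\ref{alsononwise}'' -- and, taken literally, that pointer is insufficient: the leaf-only hypothesis does \emph{not} imply the condition of Remark~\ref{alsononwise}, because an occupied unit-buffer leaf can perfectly well be a wise follower of another vertex, namely as the \emph{destination} of that vertex's route. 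Your write-up supplies exactly the observation that closes this gap: on a tree a leaf can occur on a simple path only as an endpoint, so every incoming wise-follower arc of an occupied unit-buffer vertex is a destination arc; and destination arcs are precisely the incoming arcs that survive when the item departs, since the wise-follower definition retains the destination as a stopping vertex irrespective of its occupancy. This rules out the failure mode the remark warns about, and together with your (also necessary) check that the leaf-only invariant is preserved by each progress step, it upgrades the paper's one-line remark into a genuine proof. The one step you should tighten: the destination arc $(w,u)$ also requires the intermediate vertices of $w$'s route to be free unit buffers, and the item leaving $u$ transits through some of them, so the arc is \emph{not} invariant move-by-move; you should state explicitly that the Wise Follower Network is compared only at the \emph{end} of the move sequence of Lemma~\ref{wisesubpath}, where the transit vertices are free again and the target $v$ is either a vanishing destination or has $b(v)\geq 2$ (hence was never one of those intermediate free unit buffers), so the arc does survive the whole progress step.
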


\section{Concluding Remarks}

It is somehow surprising that the Deadlock Safety Problem is easy on general networks for $b\geq 2$ but it is \textsc{np}-hard already on simple graphs when $b=1$ and on trees when $b\leq 3$. In this paper, we shed some lights on this behaviour by means of two new tools, weak deadlock sets and wise states, that allow us to improve upon previous results from the literature. In particular, we show that any state that is wise and without weak deadlock sets is safe and, in this case, there exists a simple algorithm to free the network. We also characterize when in tree networks wise states are safe: namely, a wise state is safe if and only if it has no weak deadlock sets, and this can be recognized in polynomial time.

While these results contribute to give a better picture of the complexity of the Deadlock Safety Problem, we believe that we are still far from a complete picture. Consider the instance of \textsc{dsp} depicted in Figure~\ref{fig:obst}: the state is bound to deadlock but there are no weak deadlock sets. Therefore, even on a line network, there are non-wise states that are bound to deadlock but we do not know ``why''! Note that the instance is such that Theorem~\ref{fondtrees}, Theorem~\ref{fondtreesbis} and Remark~\ref{alsononwise} do not apply.

\begin{figure}
    \centering\includegraphics[scale=0.4]{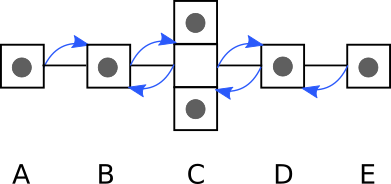}
    \caption{Vertices $A, B, C, D, E$ have respectively buffers of size  1, 1, 3, 1, 1 each holding one item but for $C$ that holds two item. The destination of the items in $A$ and $B$ and of one item in $C$ is $E$; the destination of the items in $D$ and $E$ and of the other item in $C$ is $A$. Blue arc provides the wise followers. There are no weak deadlock sets, but the instance is bound to deadlock.}
    \label{fig:obst}
\end{figure}

We therefore raise a few research questions:
\begin{enumerate}
    \item Which is the complexity of the Deadlock Safety Problem on line networks? The problem seems far from being trivial, even if we assume that there are only {\em left} items whose destination is the leftmost vertex on the line and {\em right} items whose destination is the rightmost vertex on the line, i.e., the setting of Figure~\ref{fig:obst}. 
    \item Which is the complexity of the Deadlock Safety Problem on trees when $b\leq 2$? Once again this is quite a relevant scenario in the context of rail transportation where networks are likely tree-like and either single or double-tracked~\cite{Hicks_2015}. 
    \item The Deadlock Safety Problem asks whether a state is safe or bound to deadlock. However, even knowing that a state is safe -- e.g. because $b\geq 2$ and there are no strong deadlock sets -- we might be interested in a {\em compact} representation of the feasible sequence of moves that frees the network, i.e., with a length that is polynomially bounded in the size of the input. Note that simply listing the moves does not do the job because their number (which is is equal to the potential of the starting state, see~\eqref{eq:potential_function}) needs not to be polynomially bounded in the size of the input e.g. because there is a large number of identical items. Is there any such representation?
\end{enumerate}

\smallskip\noindent
{\bf Acknowledgments} We thank Yuri Faenza for some useful comments on a previous draft of this manuscript.

\printbibliography

@article{arbib_1988,
	author = "Arbib, C. and Italiano, G. and Panconesi, A.",
	title = "Predicting deadlock in store-and-forward networks",
	year = "1990",
	journal = "Networks",
	volume = "20",
	pages = "861--881",
	number = "7"}

@article{blazewicz_1994,
  author= "Blazewicz, J. and Bovet, D.P. and Brzezinski, J. and Gambosi, G. and Talamo, M.
        ",
  journal= "IEEE Transactions on Computers", 
  title= "Optimal centralized algorithms for store-and-forward deadlock avoidance", 
  year= "1994",
  volume= "43",
  number= "11",
  pages= "1333-1338"}

@article{dalsasso_2021,
title = "The Tick Formulation for deadlock detection and avoidance in railways traffic control",
journal = "Journal of Rail Transport Planning and Management",
volume = "17",
year = "2021",
author= "Dal Sasso, V. and Lamorgese, L. and Mannino, C. and Onofri, A. and Ventura, P."}

@article{dalsasso_2022,
  title= "Easy Cases of Deadlock Detection in Train Scheduling",
  author= "Dal Sasso, V. and Lamorgese, L. and Mannino, C. and Tancredi A., A and Ventura, P.",
  year= "2022",
  volume = "70",
  pages = "2101-2118",
  journal= "Operations Research"}

@INPROCEEDINGS{kumar_1998,
  author= "Kumar, P. and Kothandaraman, K. and Ferreira, P.",
  booktitle= "Proceedings. 1998 IEEE International Conference on Robotics and Automation", 
  title= "Scalable and maximally-permissive deadlock avoidance for FMS", 
  year= "1998",
  volume= "1",
  number= "",
  pages= "580-585"}

@article{bovet_1997,
author = "Bovet, D. and Di Ianni, M. and Crescenzi, P.",
year = "1997",
title = "Deadlock Prediction in the Case of Dynamic Routing",
volume = "01",
journal = "International Journal of Foundations of Computer Science"}

@article{petersen_1983,
author = "E.R. Petersen and A.J. Taylor",
title = "Line Block Prevention In Rail Line Dispatch And Simulation Models",
journal = "INFOR: Information Systems and Operational Research",
volume = "21",
number = "1",
pages = "46--51",
year = "1983",
publisher = "Taylor \& Francis"}

@article{coffman_1971,
author = "Coffman, E. G. and Elphick, M. and Shoshani, A.",
title = "System Deadlocks",
year = "1971",
issue_date = "June 1971",
publisher = "Association for Computing Machinery",
address = "New York, NY, USA",
volume = "3",
number = "2",
journal = "ACM Comput. Surv.",
pages = "67–78",
numpages = "12"}

@article{holt_1972,
author = "Holt, R. C.",
title = "Some Deadlock Properties of Computer Systems",
year = "1972",
issue_date = "Sept. 1972",
publisher = "Association for Computing Machinery",
address = "New York, NY, USA",
volume = "4",
number = "3",
journal = "ACM Comput. Surv.",
pages = "179–196"}

@article{viswanadham_1990,
  author= "Viswanadham, N. and Narahari, Y. and Johnson, T.L.",
  journal= "IEEE Transactions on Robotics and Automation", 
  title= "Deadlock prevention and deadlock avoidance in flexible manufacturing systems using Petri net models", 
  year= "1990",
  volume= "6",
  number= "6",
  pages= "713-723"}

@article{fanti_1997,
author = "Fanti, M. P. and Maione, B. and Mascolo, S. and Turchiano, B.",
year = "1970",
title = "Event-Based Feedback Control for Deadlock Avoidance in Flexible Production Systems",
journal = "IEEE Transactions on Robotics and Automation"
}

@inproceedings{araki_1977,
    author = "Araki, T. and Sugiyama, Y. and Kasami, T. and Okui, J.",
    title = "Complexity of the Deadlock Avoidance Problem",
    booktitle = "Proceedings of 2nd IBM Symposium on the Mathematical Foundations of Computer Science",
    pages = "229-252",
    year = "1977"
}

@article{reveliotis_1997,
  author= "Reveliotis, S.A. and Lawley, M.A. and Ferreira, P.M.",
  journal= "IEEE Transactions on Automatic Control", 
  title= "Polynomial-complexity deadlock avoidance policies for sequential resource allocation systems", 
  year= "1997",
  volume= "42",
  number= "10",
  pages= "1344-1357"}

@article{lawley_2001,
    author = "Lawley, M. and Reveliotis, S.",
    year = "2001",
    title = "Deadlock Avoidance for Sequential Resource Allocation Systems: Hard and Easy Cases",
    journal = "International Journal of Flexible Manufacturing Systems",
    pages = "385-404",
    volume = "13"}

@inproceedings{diianni_1997,
author="Di Ianni, M.",
title="Wormhole deadlock prediction",
booktitle="Euro-Par'97 Parallel Processing",
year="1997",
publisher="Springer Berlin Heidelberg",
pages="188--195"
}

@article{li_2014,
title = "Deadlock analysis, prevention and train optimal travel mechanism in single-track railway system",
journal = "Transportation Research Part B: Methodological",
volume = "68",
pages = "385-414",
year = "2014",
author = "Li, F. and Sheu, J. B. and Gao, Z. Y."}

@misc{Hicks_2015,
title = "The Single-Tracked World of American Railroading",
author = "Hicks, M.",
year = "2015",
note = "{\em Streets.mn}: https://streets.mn/2015/07/28/the-single-tracked-world-of-american-railroading"
}

@inproceedings{pachl_2007,
  author = "Pachl, J.",
booktitle = "2nd International Seminar on Railway Operations Modelling and Analysis, 2007",
  title = "Avoiding Deadlocks in Synchronous Railway Simulations",
  year = "2007"}

@inproceedings{pachl_2011,
  author = "Pachl, J.",
  title = "Deadlock Avoidance in Railroad Operations Simulations",
  year = "2011",
  publisher = "Institut f{\"u}r Eisenbahnwesen und Verkehrssicherung",
  booktitle = "90th Annual Meeting des Transportation Research Board in Washington DC"}

@article{wu_2013,
author = "Wu, N. and Zhou, M. and Hu, G.",
title = "One-Step Look-Ahead Maximally Permissive Deadlock Control of AMS by Using Petri Nets",
year = "2013",
publisher = "Association for Computing Machinery",
address = "New York, NY, USA",
volume = "12",
number = "1",
journal = "ACM Trans. Embed. Comput. Syst."}

@article{toueg_1981,
author = "Toueg, S. and Steiglitz, K.",
title = "Some Complexity Results in the Design of Deadlock-Free Packet Switching Networks",
journal = "SIAM Journal on Computing",
volume = "10",
number = "4",
pages = "702-712",
year = "1981"}

@article{luteberget_2021,
   title= "Improving Online Railway Deadlock Detection using a Partial Order Reduction",
   volume= "348",
   journal= "Electronic Proceedings in Theoretical Computer Science",
   author= "Luteberget, B.",
   year= "2021",
   pages= "110–127" }
\end{document}